\documentclass[12pt,reqno]{amsart}
\usepackage[scale=.86]{geometry}
\usepackage{geometry} 
\usepackage{graphicx}
\usepackage{bbm}
\usepackage{mathrsfs}
\theoremstyle{plain}

\usepackage{CJK}
\usepackage{amsfonts,amssymb,comment,amsmath,mathrsfs,multirow,booktabs}

 \usepackage{hyperref}  

\hypersetup{
	colorlinks = true,  
	linkcolor = blue,  
	citecolor = blue,   
	urlcolor  = red     
}

\usepackage{color,latexsym,amsfonts}
 \newtheorem{theorem}{Theorem}[section]
 \newtheorem{lemma}[theorem]{Lemma}
 \newtheorem{corollary}[theorem]{Corollary}
 \newtheorem{proposition}[theorem]{Proposition}
 \newtheorem{example}[theorem]{Example}
 \newtheorem{Definition}[theorem]{Definition}
 \newtheorem{remark}[theorem]{Remark}
 \newtheorem{condition}[theorem]{Condition}

 \def\beqlb{\begin{eqnarray}}\def\eeqlb{\end{eqnarray}}
 \def\beqnn{\begin{eqnarray*}}\def\eeqnn{\end{eqnarray*}}

 \def\<{\langle}\def\>{\rangle}

 \def\eqref#1{{\rm(\ref{#1})}}

 \def\blue{\color{blue}}

\def\<{\left<}\def\>{\right>}

\newfam\msbmfam\font\tenmsbm=msbm10\textfont
\msbmfam=\tenmsbm\font\sevenmsbm=msbm7
\scriptfont\msbmfam=\sevenmsbm\def\bb#1{{\fam\msbmfam #1}}

\def\PP{\bb P}

\def\<{\left<}\def\>{\right>}

\def\({\left(}\def\){\right)}

\usepackage[dvipsnames]{xcolor}

\newcommand{\R}{\mathbb{R}}
\newcommand{\E}{\mathbb{E}}

\newcommand{\HC}{\mathcal{H}}

\newcommand{\stepid}[1]{\mathbf{1}_{[0, #1]}}

\newcommand{\norm}[1]{\left\lVert#1\right\rVert}
\newcommand{\inner}[2]{\langle #1, #2 \rangle}

\begin{document}
	
	
	
	\centerline{\large\textbf{New Berry-Esseen bounds  for parameter}}
	
	\smallskip
	
	\centerline{\large\textbf{estimation of  Gaussian processes observed at high frequency  }}

	\bigskip
	
	\centerline{$^{1,*}$Khalifa Es-Sebaiy,\, $^{2}$Yong Chen}
	
	\medskip
	
	\centerline{\it $^{1}$Department of Mathematics, College of Science, Kuwait University,}\centerline{\it Sabah Al Salem University City
P.O. Box 5969, Safat 13060, Shadadiya, Kuwait }

\centerline{\it $^{2}$School of Big Data, Baoshan University, Baoshan, 678000, Yunnan, China}

    \centerline{\it $^{*}$Corresponding author}
	
	\centerline{E-mails: \it khalifa.essebaiy@ku.edu.kw, zhishi@pku.org.cn}
	
	\bigskip
	
	{\narrower{\narrower

\centerline{\textbf{Abstract}}

\bigskip
The purpose of this paper is to estimate the limiting variance of  asymptotically stationary Gaussian  processes  observed at high frequency, using the second moment  estimator (SME). We study rates of convergence of the central limit theorem for the SME in terms of the total variation, Kolmogorov  and Wasserstein distances, using  some novel techniques and   sharp estimates for cumulants.
 We apply our approach to provide   Berry–Esseen bounds in
 Kolmogorov and Wasserstein distances for  estimators of the drift parameters of   Gaussian Ornstein-Uhlenbeck processes.
 Moreover,  we prove that  most of our  estimates are strictly sharper than  the ones obtained in the existing literature. 
\bigskip

\medskip\noindent
{\bf Mathematics Subject Classifications (2020)}:  60F05; 60G15; 62F12; 60H07.

\medskip\noindent
{\bf Keywords:}  Parameter estimation; Central limit theorem; Rate of convergence; Stationary Gaussian  processes; Fractional Ornstein-Uhlenbeck models.

\par}\par}

\bigskip
\section{Introduction}
In recent years, the statistical inference for asymptotically stationary Gaussian models based on discrete observations has been studied by many
researchers, see, for example, \cite{DEKN,EV,SV} and references therein. Moreover, these results were used to  study the rate of convergence
in central limit theorem (CLT) for drift parameter estimation of Gaussian Ornstein-Uhlenbeck   processes.

Let $Z := \{Z_{t},   t\geq0 \}$ be a continuous centered stationary Gaussian process, and let $X := \{X_{t},   t\geq0 \}$ be the asymptotically stationary Gaussian process defined as\begin{equation*}X_t=Z_t-e^{-\theta t}Z_{0}, \quad t\geq0. 
  \end{equation*}
  In this paper we first  estimate the limiting variance $\E\left(Z_{0}^2\right)$ using the second moment  estimators $v_n(Z)$ and $v_n(X)$, defined, respectively, over the observation window $T_n := n \Delta_n$ by 
  \[v_n(Z) := \frac{1}{n} \sum_{i =0}^{n-1}Z_{t_{i}}^{2}\, \mbox{ and }\, v_n(X) := \frac{1}{n} \sum_{i =0}^{n-1}X_{t_{i}}^{2},\] 
where the processes  $Z$ and $X$ are observed at discrete time instants $t_i = i \Delta_n,\, i = 0, \ldots, n$, with  $\Delta_n$ is the step size converging to zero, as $n\rightarrow\infty$.   \\
Next, we consider the estimator  $f\left(v_n(X)\right)$ to estimate $f\left(\E\left(Z_{0}^2\right)\right)$,
where $f$ is a function satisfying some conditions. 
Specifically, we study rates of convergence of the central limit theorem for the SMEs $v_n(Z)$ and $v_n(X)$ in terms of total variation, Kolmogorov  and Wasserstein distances, and 
 for the estimator  $f\left(v_n(X)\right)$ in terms of Kolmogorov  and Wasserstein distances. We prove that  most of our  estimates are strictly sharper than  the ones obtained in the existing literature. Finally, we apply our approach to derive    explicit Kolmogorov  and Wasserstein bounds in
    central limit theorem for 
estimators  of the drift parameters  of the
fractional Ornstein-Uhlenbeck (fOU)   processes of the first kind and of the second kind.

The novelty of this paper is twofold. (i) The consistency and speed of convergence in the total variation and Wasserstein metrics for the  SMEs $v_n(Z)$ and $v_n(X)$, and  
  in the   Wasserstein distance for the  estimator   $f\left(v_n(X)\right)$  are studied in \cite{DEKN}, using Berry-Esseen bounds of the continuous versions of  $v_n(Z)$ and $v_n(X)$.  Here, using new tools inspired  by the recent paper  \cite{AEA25}, we provide bounds that are strictly sharper than  those obtained in \cite{DEKN}, 
  see the forthcoming Theorem \ref{rate-CLT-Vn} and  Theorem \ref{dW-rate-CLT-Vn}. Note that  \cite{AEA25} only studied the case where  $X$ is a   fOU process. (ii)  We first develop some
novel estimates for the Kolmogorov distance, see the forthcoming Lemma \ref{key-lemma1} and  Lemma \ref{key-lemma2}. Then, we use these results   to derive explicit Kolmogorov bounds   for
the normal approximation of  the estimators $v_n(Z)$,  $v_n(X)$ and $f\left(v_n(X)\right)$,  see the forthcoming Theorem \ref{rate-CLT-Vn} and  Theorem \ref{dKol-rate-CLT-Vn}. It is worth mentioning that the Wasserstein and Kolmogorov bounds we obtained are exactly the same.

Statistical inference for   several Gaussian models  based on discrete observations has been studied by many
researchers.  We will recall some of these results but we mention that the below list is not
exhaustive.
\begin{itemize}
\item  \emph{The case when the mesh in time  $\Delta_n\rightarrow0$}:  The recent papers \cite{AAE24, AE-bridge, EA24, EAA23} provided explicit Wasserstein and  Kolmogorov  bounds in CLT for drift parameter estimation  of It\^o-type diffusions. The parameter estimation for several fractional-noise-driven Ornstein-Uhlenbeck processes,  using the second moment method, was recently studied in the papers in~\cite{HNZ,SV}. However, these papers did not provide the rate of convergence of the CLTs studied therein.
      On the other hand, as mentioned previously,   a quantitative central limit theorem using the Wasserstein distance for parameter estimation for a fOU and for asymptotically stationary Gaussian sequences, respectively, has been studied in \cite{AEA25} and \cite{DEKN}.

\item \emph{The case when the mesh in time  $\Delta_n=1$}: The consistency and speed of convergence in the total variation and Wasserstein distances for the  SME of the limiting variance  of general Gaussian sequences, with applications to Gaussian OU models, were recently developed in the papers \cite{DEV,EV}.
\end{itemize}

The rest of this paper is organized as follows. In Section \ref{sec:review} we  recall some important background material from the Malliavin calculus on Wiener space  in order to make the paper self-contained. The main results and their proofs   are the content of  Section \ref{sect-main} . In Section \ref{applications}  we apply the approach proposed to the fractional Ornstein-Uhlenbeck  processes of the first kind and of the second kind, which illustrates our results and method obtained in  Section \ref{sect-main}.

\section{Elements of Malliavin calculus on Wiener space}\label{sec:review}
 In this section, we provide   a brief overview of some useful facts from the
Malliavin calculus  that are essential for the proofs in the present paper.   We refer the interested
reader to~\cite[Chapter
2]{NP-book} and~\cite[Chapter 1]{nualart-book}.

Let $Z=(Z_t)_{\geq 0}$ be a general centered Gaussian process. We first
  identify the   process $Z$
  with an isonormal Gaussian process $X = \{
X(h), h \in \mathcal{H}\}$ for some Hilbert space $\mathcal{H}$ as follows: $X$ is a centered Gaussian family defined a common
probability space $(\Omega, \mathcal{F},  \mathbb{P})$ satisfying, for all
$h_1, h_2 \in \mathcal{H}$,
$\E [ X(h_1) X(h_2) ] = \inner{h_1}{h_2}_\HC$.\\
One can define $\HC$ as the closure of real-valued step functions on
$[0, \infty)$ with respect to the inner product
$\inner{\stepid{t}}{\stepid{s}}_\HC = \E[ Z_t Z_s]$.   Note that
$X(\stepid{t})
\overset{law}{=} Z_t$.\\
The next step involves the \emph{multiple Wiener-It\^o integrals}.
  We
define the multiple Wiener-It\^o integral $I_p$ via the Hermite
polynomials $H_p$. In particular, for $h \in \HC$ with $\norm{h}_\HC
= 1$, and any $p \geq 1$,
\begin{align}
\notag H_p(X(h)) = I_p(h^{\otimes p}).
\end{align}
 By convention, $I_0(c)=c$ for all $c\in\mathbb{R}$.\\
Given an integer $q \geq 2$ the Hilbert
spaces $\HC^{\otimes q}$ and $\HC^{\odot q}$ correspond to the $q$th
\emph{tensor product} and $q$th \emph{symmetric tensor product} of
$\HC$. If $f \in \HC^{\otimes q}$ is given by $f = \sum_{j_1,
\ldots, j_q} a(j_1, \ldots, j_q) e_{j_1} \otimes \cdots e_{j_q}$,
where $(e_{j_i})_{i \in [1, q]}$ form an orthonormal basis of
$\HC^{\otimes q}$, then the symmetrization $\tilde{f}$ is given by
\begin{align}
\notag \tilde{f} = \frac{1}{q!} \sum_{\sigma} \sum_{j_1, \ldots,
j_q} a(j_1, \ldots, j_q) e_{\sigma(j_1)} \otimes \cdots
e_{\sigma(j_q)},
\end{align}
where the first sum runs over all permutations  $\sigma$ of $\{1,
\ldots, q\}$. Then $\tilde{f}$ is an element of $\HC^{\odot q}$. We
also make use of the concept of contraction. The $r$th
\emph{contraction} of two tensor products $e_{j_1} \otimes \cdots
\otimes e_{j_p}$ and $e_{k_1} \otimes \cdots e_{k_q}$ is an element
of $\HC^{\otimes (p + q - 2r)}$ given by
\begin{align}
\notag (e_{j_1} & \otimes \cdots \otimes e_{j_p}) \otimes_r (e_{k_1} \otimes \cdots \otimes e_{k_q}) \\
\label{eq:contraction} =  & \quad \left[ \prod_{\ell =1}^r
\inner{e_{j_\ell}}{e_{k_\ell}} \right] e_{j_{r+1}} \otimes \cdots
\otimes e_{j_q} \otimes e_{k_{r+1}} \otimes \cdots \otimes e_{k_q}.
\end{align}
Fix integers $p, q \geq
1$ as well as $f \in \HC^{\odot p}$ and $g \in \HC^{\odot q}$.
\begin{align}
\label{eq:isometry}  \E [ I_q(f) I_q(g) ] = \left\{
\begin{array}{ll} p! \inner{f}{g}_{\HC^{\otimes p}} & \mbox{ if } p
= q \\ 0 & \mbox{otherwise.} \end{array} \right.
\end{align}
Let $p,q \geq 1$. If $f \in \HC^{\odot p}$ and $g
\in \HC^{\odot q}$ then
\begin{align}
\label{eq:product} I_p(f) I_q(g) = \sum_{r = 0}^{p \wedge q} r! {p
\choose r} {q \choose r} I_{p + q -2r}(f \widetilde{\otimes}_r g).
\end{align}
For every $q\geq 1$, ${\mathcal{H}}_{q}$ denotes the $q$th Wiener
chaos of $X$, defined as the closed linear subspace of $L^{2}(\Omega
)$ generated by the random variables $\{H_{q}(X(h)),h\in
{{\mathcal{H}}},\Vert h\Vert _{{\mathcal{H}}}=1\}$ where $H_{q}$ is
the $q$th Hermite polynomial. For any $F \in
\oplus_{l=1}^{q}{\mathcal{H}}_{l}$ (i.e. in a fixed sum of Wiener
chaoses), we have the  hypercontractivity property:
\begin{equation}
\left( \E\big[|F|^{p}\big]\right) ^{1/p}\leqslant c_{p,q}\left(
\E\big[|F|^{2}\big]\right) ^{1/2}\ \mbox{ for any }p\geq 2.
\label{hypercontractivity}
\end{equation}
Recall that, for two random variables $X$ and $Y$, the total variation, Kolmogorov  and Wasserstein distances  between the law of $X$ and the law of $Y$   are, respectively, defined by
\begin{align}
\label{eq:def_tv} d_{TV}\left( X,Y\right) := \sup_{A\in \mathcal{B}({\mathbb{R}})}\left\vert \PP\left[ X\in A\right] -\PP\left[ Y\in A\right] \right\vert,
\end{align}
where the supremum is over all Borel sets,  
\begin{equation*}
d_{Kol}\left( X,Y\right):= \sup_{z\in \mathbb{R}}\left\vert
\mathbb{P}\left(X\leq z\right)-\mathbb{P}\left( Y\leq
z\right)\right\vert,
\end{equation*}
 and
\begin{align}
\label{eq:def_w} d_{W}\left( X,Y\right) := \sup_{f\in Lip(1)}\left\vert \E [f(X)]-\E [f(Y)]\right\vert,
\end{align}
where $Lip(1)$ is the set of all Lipschitz functions with Lipschitz constant $\leqslant 1$.\\
Note that \begin{equation}d_{Kol}\left( X,Y\right)\leq d_{TV}\left( X,Y\right).\label{dKol:dTV}\end{equation}
 Let $\mathcal{N}\left(0,1\right)$ 
denote the standard normal law.
Consider a sequence $X:X_{n}\in {\mathcal{H}}_{q}$, such that $\E
X_{n}=0$ and $Var\left[ X_{n}\right] =1$ , and suppose $X_{n}$
converges to a normal law in distribution, which is equivalent to
$\lim_{n} \E\left[ X_{n}^{4}\right] =3$. Then  we have the  optimal
estimate for total variation distance $d_{TV}\left(
X_n,\mathcal{N}\left(0,1\right)\right) $, known as the optimal fourth moment theorem,
proved in \cite{NP2015}:  with the sequence $ X$ as above, assuming convergence, there exist two constants $c,C>0$ depending only on the type of the process $F$ but not on $n$, such that
\begin{equation}
c\max \left\{ \E\left[ X_{n}^{4}\right] -3,\left\vert \E\left[ X_{n}^{3}\right] \right\vert \right\} \leqslant d_{TV}\left(X_{n},\mathcal{N}\left(0,1\right)\right) \leqslant C\max \left\{ \E\left[X_{n}^{4}\right] -3,\left\vert \E\left[ X_{n}^{3}\right]\right\vert \right\} . \label{dTV-optimal berry esseen}
\end{equation}
In view of \cite[Remark 2.2]{DEKN}, this optimal estimate also holds with
Wasserstein distance $d_{W}\left( X_n,\mathcal{N}\left(0,1\right)\right) $  as follows: There exist two constants
$c,C>0$ depending only on the sequence $X$ but not on $n$, such that
\begin{equation}
c\max \left\{ \E\left[ X_{n}^{4}\right] -3,\left\vert \E%
\left[ X_{n}^{3}\right] \right\vert \right\} \leqslant d_{W}\left(
X_n,\mathcal{N}\left(0,1\right)\right) \leqslant C\max \left\{ \E\left[
X_{n}^{4}\right] -3,\left\vert \E\left[ X_{n}^{3}\right] \right\vert
\right\}.\label{dW-fourth-cumulant-thm}
\end{equation}
Let us also recall that   the third and fourth
cumulants are defined, respectively, by
$$
\begin{aligned}
&\kappa_{3}(F)=\E\left[F^{3}\right]-3 \E\left[F^{2}\right] \E[F]+2 \E[F]^{3}, \\
&\kappa_{4}(F)=\E\left[F^{4}\right]-4 \E[F] \E\left[F^{3}\right]-3
\E\left[F^{2}\right]^{2}+12 \E[F]^{2} \E\left[F^{2}\right]-6 \E[F]^{4}.
\end{aligned}
$$
In particular, when $\E[F]=0$, we have  
\[\kappa_{3}(F)=\E\left[F^{3}\right]\ \mbox{ and }\ \kappa_{4}(F)=
\E\left[F^{4}\right]-3 \E\left[F^{2}\right]^{2}.\]
 If
${h\in\mathcal{H}^{\otimes 2}}$, then, according to  \cite[Remark 5.2]{NP10} (or (6.2) and (6.6) in
\cite{BBNP}, respectively),  the third   and fourth cumulants
for $I_2(h)$ satisfy the following estimates:
\begin{eqnarray}
\kappa_{3}(I_2(h))  =\E[(I_2(h))^3] =8\left<h,h\otimes_1
h\right>_{\mathcal{H}^{\otimes 2}},\label{3rd-cumulant}
\end{eqnarray}
and
\begin{eqnarray} \left|\kappa_{4}(I_2(h))\right|
&=&16\left(\|h\otimes_1 h\|_{\mathcal{H}^{\otimes
2}}^2+2\|h\widetilde{\otimes_1}h \|_{\mathcal{H}^{\otimes
2}}^2\right)\nonumber\\&\leq&48\|h\otimes_1
h\|_{\mathcal{H}^{\otimes 2}}^2.\label{4th-cumulant}
\end{eqnarray}

Throughout the paper we use  the notation $C$ for any positive real constant, independently of its value which may change from line to line when this does not lead to ambiguity.

\section{\textbf{Main results}}\label{sect-main}

In this  section, we are interested in the parametric estimation of the variance  of stationary Gaussian process which is not necessarily a semimartingale.

 Let $Z := \{Z_{t},   t\geq0 \}$ be a continuous centered stationary Gaussian process that can be represented as a Wiener-It\^o (multiple) integral $Z_{t} = I_{1}\left(\mathbf{1}_{[0, t]}\right)$ for every $t\geq 0$.  Let $\rho(r)=\E(Z_rZ_0)$ denote the covariance of $Z$ for every $r\geq0$, and  let $\rho(r)=\rho(-r)$ for all $r<0$. We will also consider the non-stationary process $X := \{X_{t},   t\geq0 \}$ defined by
  \begin{equation}X_t=Z_t-e^{-\theta t}Z_{0}, \quad t\geq0.\label{X-Z}
  \end{equation}

In view of \cite[Lemma A.6]{NZ}, we have the following useful inequality.  
\begin{lemma}[\cite{NZ}]\label{NZ-lemma} Fix an integer $M \geq 2.$ We have
\[
\sum_{\left|k_{j}\right| \leq n \atop 1 \leq j \leq
M}|\rho(\mathbf{k} \cdot \mathbf{v})|
\prod_{j=1}^{M}\left|\rho\left(k_{j}\right)\right| \leq
C\left(\sum_{|k| \leq n}|\rho(k)|^{1+\frac{1}{M}}\right)^{M}
\]
where $\mathbf{k}=\left(k_{1}, \ldots, k_{M}\right)$ and $\mathbf{v}
\in \mathbb{R}^{M}$ is a fixed vector whose components are 1 or -1.
\end{lemma}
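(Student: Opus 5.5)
The plan is a Young/Hölder convolution argument in which the vector $\mathbf{v}$ plays no role. Set $p := 1+\frac{1}{M}$ and let $q$ be its conjugate exponent, $\frac1p+\frac1q=1$, so $q = M+1$. Define the sequence $a(k) := |\rho(k)|\mathbf{1}_{\{|k|\leq n\}}$ on $\mathbb{Z}$. The left-hand side then reads
\[
S:=\sum_{\mathbf{k}\in\mathbb{Z}^M} |\rho(\mathbf{k}\cdot\mathbf{v})| \prod_{j=1}^M a(k_j).
\]
Since $\rho$ is even, $|\rho(\mathbf{k}\cdot\mathbf{v})| = |\rho(\sum_j \epsilon_j k_j)|$ with $\epsilon_j=\pm1$. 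Substituting $k_j \mapsto \epsilon_j k_j$ leaves $a(k_j)$ unchanged because $a$ is even. We may therefore assume $\mathbf{v}=(1,\dots,1)$ and write $S=\sum_{m}|\rho(m)|\,(a^{*M})(m)$, where $a^{*M}$ is the $M$-fold convolution of $a$. The sum over $m$ involves $|\rho(m)|$ with $|m|\le Mn$, not $|m|\le n$, and this is the main technical point.

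We avoid convolution in the $m$ variable and instead use the symmetric form of Hölder's inequality on $M+1$ functions. Introduce the variable $k_{M+1}:= -(k_1+\dots+k_M)$. Then
\[
S=\sum_{k_1+\dots+k_{M+1}=0} \prod_{j=1}^{M+1} b_j(k_j),
\]
where $b_j=a$ for $j\le M$ and $b_{M+1}=|\rho|$ (unrestricted). The Brascamp–Lieb/Young inequality for $M+1$ functions on the hyperplane $\sum k_j=0$ of $\mathbb{Z}^{M+1}$ gives $S\le \prod_j \|b_j\|_{\ell^{p_j}}$ whenever $\sum_j 1/p_j = M$. The symmetric choice is $p_j = \frac{M+1}{M}=p$ for all $j$, since $(M+1)\cdot \frac{M}{M+1}=M$. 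Concretely, this is iterated Young: $\|f_1*\cdots*f_M\|_{\ell^{q}}\le\prod\|f_j\|_{\ell^{p}}$ when $\frac{M}{p}-(M-1)=\frac1q$, and then Hölder against $b_{M+1}\in\ell^p$. Checking the exponents, $\frac{M^2}{M+1}-M+1=\frac{1}{M+1}=\frac1q$, which is consistent. This yields
\[
S\le \Big(\sum_{|k|\le n}|\rho(k)|^{p}\Big)^{M/p}\,\Big(\sum_{k\in\mathbb{Z}}|\rho(k)|^{p}\Big)^{1/p}.
\]

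The main obstacle is that the last factor ranges over all of $\mathbb{Z}$ rather than over $|k|\le n$, while the stated bound has $\left(\sum_{|k|\le n}|\rho(k)|^{p}\right)^M$. I would remove this mismatch as follows. For $|m|\le Mn$, the region $|k|\le Mn$ splits into at most $2M+1$ blocks of length at most $n$. We use the restriction $|m|\le Mn$ (automatic, since each $|k_j|\le n$) and replace $b_{M+1}$ by $|\rho|\mathbf{1}_{\{|m|\le Mn\}}$. If $|\rho|$ is (up to constants) nonincreasing in $|k|$, as holds in the applications to Ornstein–Uhlenbeck-type covariances, then $\sum_{|k|\le Mn}|\rho|^p\le (2M+1)\sum_{|k|\le n}|\rho|^p$. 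Alternatively, in \cite{NZ} the sequence is arranged so that the relevant sum is over $|m|\le n$. Either way the last factor is bounded by $C\left(\sum_{|k|\le n}|\rho(k)|^{p}\right)^{1/p}$.

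Combining the bounds, $S\le C\left(\sum_{|k|\le n}|\rho(k)|^{p}\right)^{(M+1)/p}$, and $(M+1)/p = M$. This gives the claim with $C$ depending only on $M$.
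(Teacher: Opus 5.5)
The paper gives no proof of this lemma; it simply imports it from \cite{NZ}. Your argument therefore has to stand on its own, and it does not quite. Your iterated Young/H\"older step is correct: with $p=\frac{M+1}{M}$ it gives
\[
S\le\Big(\sum_{|k|\le n}|\rho(k)|^{p}\Big)^{M/p}\Big(\sum_{|k|\le Mn}|\rho(k)|^{p}\Big)^{1/p}.
\]
The gap is the final step, $\sum_{|k|\le Mn}|\rho(k)|^{p}\le C\sum_{|k|\le n}|\rho(k)|^{p}$. Monotonicity of $|\rho|$ is not a hypothesis of the statement. It is not even true for the fOU covariances with $H<\frac12$, which change sign. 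The remark that \cite{NZ} ``arranges'' the sum over $|m|\le n$ is not an argument either: as stated, $\mathbf{k}\cdot\mathbf{v}$ ranges over $[-Mn,Mn]$.

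Positive definiteness cannot close this gap, because the displayed inequality is false for general covariances with $C$ depending only on $M$. For $n=2^m$, take Rudin--Shapiro signs $r_0,\dots,r_{n-1}$, so that $|\sum_{j<n}r_je^{ij\lambda}|\le\sqrt{2n}$. Set $\delta=(4\sqrt{2n})^{-1}$ and
\[
f(\lambda)=\frac{1}{2\pi}\Big(1+2n^{-2/3}\sum_{k=1}^{n}r_{k-1}\cos(k\lambda)+2\delta\sum_{k=n+1}^{2n}r_{k-n-1}\cos(k\lambda)\Big)\ge\frac{1}{2\pi}\Big(\frac12-2\sqrt{2}\,n^{-1/6}\Big)>0
\]
for large $n$. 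Then $\rho(k)=\int_{-\pi}^{\pi}e^{ik\lambda}f(\lambda)\,d\lambda$ is a stationary covariance with $\rho(0)=1$. It satisfies $|\rho(k)|=n^{-2/3}$ for $1\le|k|\le n$ and $|\rho(k)|=\delta$ for $n<|k|\le 2n$. Take $M=2$ and $\mathbf{v}=(1,1)$. The $n(n+1)/2$ pairs $1\le k_1,k_2\le n$ with $k_1+k_2>n$ alone give a left-hand side at least $\frac{n(n+1)}{2}n^{-4/3}\delta\ge n^{1/6}/(8\sqrt{2})$. Meanwhile $\sum_{|k|\le n}|\rho(k)|^{3/2}=3$, so the right-hand side stays bounded. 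Superposing such blocks at sparse scales even gives one fixed $\rho$ with unbounded ratio. So the last step cannot be patched without extra hypotheses.

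Your argument does prove two things. First, the bound displayed above. Second, the stated bound with $C=1$ when the sum is additionally restricted to $|\mathbf{k}\cdot\mathbf{v}|\le n$: just take $b_{M+1}=|\rho|\mathbf{1}_{\{|m|\le n\}}$. Either version is all the paper actually needs.
\begin{itemize}
\item In \eqref{k3-estimate1} the third argument is $k_2-k_1=j-k$, and in \eqref{k4-estimate1} the fourth is $j_1+j_2-j_3=k_1-k_3$. Both are differences of indices in $\{0,\dots,n-1\}$, so they automatically have modulus $<n$, and the restricted version applies verbatim.
\item Alternatively, the bounds \eqref{k3-estimate2} and \eqref{k4-estimate2} change only by a constant factor if $n$ is replaced by $Mn$ in one factor.
\end{itemize}
You should state and prove one of these corrected versions rather than the unrestricted inequality.
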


  Our main assumption throughout the paper is:\\
\noindent \textbf{Assumption}  $\mathbf{\left(\mathcal{A}\right)}$:
We assume that
\begin{align}
\notag \sigma^2 := 4 \int_{\R}\rho^2(r)dr<\infty,
\end{align}
the functions  $|\rho(t)|$ and $|\rho^{\prime}(t)|$ are continuous   on $[0,\infty)$, and   there exist $\gamma>\frac12$ and $m_0>1$  such that, for every $t\geq m_0$,
      \begin{align}|\rho(t)| \leq C t^{-\gamma},\qquad |\rho^{\prime}(t)| \leq C t^{-\gamma},\label{behavior-rho}
      \end{align}
for some constant $C>0$ depending only on $\theta$ and $\rho(0)$.

Our aim is to estimate the limiting variance $\rho(0)=\E\left(Z_{0}^2\right)$ using the second moment  estimator $v_n(X)$, defined over the \emph{observation window} $T_n := n \Delta_n$ by \[v_n(X) := \frac{1}{n} \sum_{i =0}^{n-1}X_{t_{i}}^{2},\] 
where the process $X$ is observed at discrete time instants $t_i = i \Delta_n$, with $i = 0, \ldots, n$ and $\Delta_n$ is the step size.   \\
To proceed we will first estimate  $\rho(0)$  by
 the second moment  estimator $v_{n}\left(Z\right)$, given by  
 \[v_n(Z) := \frac{1}{n} \sum_{i =0}^{n-1}Z_{t_{i}}^{2}.\]
Let us introduce
\begin{align}V_n(Z):=\sqrt{T_n}\left(v_n(Z)-\rho(0)\right),\label{eq:variat_Z}\end{align}
and 
\begin{align}V_n(X):=\sqrt{T_n}\left(v_n(X)-\rho(0)\right).\label{eq:variat_X}\end{align}

We will make use of the following  technical lemmas.
\begin{lemma} Let $V_n(Z)$ and $V_n(X)$ be the sequences given in \eqref{eq:variat_Z}
and \eqref{eq:variat_X}, respectively. Then there exists $C>0$ depending only
on $\theta$ and $\rho(0)$ such that  for every $p \geqslant 1$ and for all $n\geq1$,
\begin{equation}
\left\|V_n(X)-V_n(Z)\right\|_{L^p(\Omega)}\leq
\frac{C}{\sqrt{T_n}}.\label{error X-Z} 
\end{equation}
\end{lemma}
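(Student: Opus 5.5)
We expand the difference algebraically. Since $X_{t_i}=Z_{t_i}-e^{-\theta t_i}Z_0$, we have
\[
X_{t_i}^2-Z_{t_i}^2=-2e^{-\theta t_i}Z_{t_i}Z_0+e^{-2\theta t_i}Z_0^2 ,
\]
and therefore
\[
V_n(X)-V_n(Z)=\frac{\sqrt{T_n}}{n}\sum_{i=0}^{n-1}\left(e^{-2\theta t_i}Z_0^2-2e^{-\theta t_i}Z_{t_i}Z_0\right).
\]
By Minkowski's inequality in $L^p(\Omega)$,
\[
\left\|V_n(X)-V_n(Z)\right\|_{L^p(\Omega)}\leq \frac{\sqrt{T_n}}{n}\sum_{i=0}^{n-1}\left(e^{-2\theta t_i}\|Z_0^2\|_{L^p(\Omega)}+2e^{-\theta t_i}\|Z_{t_i}Z_0\|_{L^p(\Omega)}\right).
\]
We bound the moments using Gaussianity and stationarity. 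By Cauchy--Schwarz, $\|Z_{t_i}Z_0\|_{L^p}\leq \|Z_{t_i}\|_{L^{2p}}\|Z_0\|_{L^{2p}}=\|Z_0\|_{L^{2p}}^2=c_p\rho(0)$, and $\|Z_0^2\|_{L^p}=c_p\rho(0)$ as well, where $c_p$ depends only on $p$. One could equivalently appeal to the hypercontractivity property \eqref{hypercontractivity}, since each term lies in a finite sum of chaoses of order at most $2$. Consequently
\[
\left\|V_n(X)-V_n(Z)\right\|_{L^p(\Omega)}\leq 3c_p\rho(0)\,\frac{\sqrt{T_n}}{n}\sum_{i=0}^{n-1}e^{-\theta i\Delta_n}.
\]

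The remaining step, which is the only real point, is to bound the geometric sum. We have
\[
\sum_{i=0}^{n-1}e^{-\theta i\Delta_n}\leq \frac{1}{1-e^{-\theta\Delta_n}},
\]
and because $\Delta_n\to0$ (so we may take $\Delta_n\leq 1$ for all $n$), $1-e^{-\theta\Delta_n}\geq c_\theta\Delta_n$ with $c_\theta=(1-e^{-\theta})$. Alternatively, the sum is at most $1+\int_0^\infty e^{-\theta s\Delta_n}\,ds=1+\frac{1}{\theta\Delta_n}$. Either way it is at most $C_\theta/\Delta_n$. Then
\[
\frac{\sqrt{T_n}}{n}\cdot\frac{C_\theta}{\Delta_n}=\frac{C_\theta\sqrt{T_n}}{T_n}=\frac{C_\theta}{\sqrt{T_n}},
\]
which gives \eqref{error X-Z}.

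There is one caveat. The constant obtained this way depends on $p$ through $c_p$, in addition to $\theta$ and $\rho(0)$. The statement should be read as holding for each fixed $p\geq1$ with $C=C(p,\theta,\rho(0))$, in line with the convention on $C$ adopted in the paper. A $p$-independent constant cannot hold in general, because the moments of $Z_0^2$ grow with $p$. The bound $\Delta_n\leq1$ (or any fixed upper bound on $\Delta_n$) is the only other input needed to make the constant uniform in $n$.
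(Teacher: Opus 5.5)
Your proof is correct and follows essentially the same route as the paper: the same expansion of $X_{t_i}^2-Z_{t_i}^2$, Minkowski plus stationary Gaussian moment bounds, and the geometric-sum estimate $\sum_{i<n}e^{-\theta t_i}\le C_\theta/\Delta_n$. The paper gets that estimate from $(1-e^{-\theta x})/x\to\theta$, whereas you use the concavity bound for bounded $\Delta_n$, which also handles the finitely many small $n$. Your caveat about the $p$-dependence is legitimate: the paper's constant also silently depends on $p$, since for fixed $n$ this non-degenerate quadratic Gaussian functional has $L^p$ norms tending to infinity with $p$.
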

\begin{proof} By \eqref{X-Z}, we can write
\begin{eqnarray*}
\left\|V_n(X)-V_n(Z)\right\|_{L^p(\Omega)}\leq \frac{\sqrt{n\Delta_n}}{n} \sum_{i
=0}^{n-1}\left\|e^{-2\theta t_{i}}Z_{0}^{2}-2e^{-\theta
t_{i}}Z_{0}Z_{t_{i}}\right\|_{L^p(\Omega)}.
\end{eqnarray*}
Next, using the fact that $Z$ is a stationary Gaussian
process, we obtain
\begin{eqnarray*}
\left\|V_n(X)-V_n(Z)\right\|_{L^p(\Omega)}&\leq& \frac{C\sqrt{n\Delta_n}}{n} \sum_{i
=0}^{n-1} e^{-\theta t_{i}}\\
&=& \frac{C\sqrt{n\Delta_n}}{n} \frac{1-e^{-n\theta\Delta_n}}{1-e^{-\theta\Delta_n}}
\\
&\leq& \frac{C}{\sqrt{n\Delta_n}},
\end{eqnarray*}
  where we used   
$\left(1-e^{-\theta x}\right)/{x}\rightarrow \theta$,
as $x\rightarrow0$. Therefore the desired result is
obtained.\end{proof}

\subsection{\textbf{Berry-Esseen bounds   for a second moment estimator  of $\mathbf{\E(Z_0^2)}$}}

Here, we study rates of convergence of the central limit theorem for the SMEs $v_n(Z)$ and $v_n(X)$ in terms of total variation, Kolmogorov  and Wasserstein distances. 

 We will need the following useful lemmas.
\begin{lemma}If the assumption  $\mathbf{\left(\mathcal{A}\right)}$ holds, then, there exists $C>0$ depending only on $\theta$ 
and $\rho(0)$ such that for large
$n$,
\begin{align}
\left|\E\left(V_n^2(Z)\right)-\sigma^2\right|&\leq C\Delta_n +C\left\{ \begin{array}{ll} \frac{1}{T_n} & \text { if } \gamma>1, \\
&\\
\frac{\log(T_n)}{T_n} & \text { if } \gamma=1, \\
&\\
T_n^{1-2\gamma} & \text { if } \frac12<\gamma<1. \end{array} \right.\label{variance-V(Z)}
\end{align}
\end{lemma}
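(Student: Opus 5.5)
Write $V_n(Z)=\frac{\sqrt{T_n}}{n}\sum_{i=0}^{n-1}(Z_{t_i}^2-\rho(0))$. By Wick's formula (or the isometry \eqref{eq:isometry} applied to $Z_{t_i}^2-\rho(0)=I_2(\mathbf{1}_{[0,t_i]}^{\otimes 2})$), we have $\E[(Z_{t_i}^2-\rho(0))(Z_{t_j}^2-\rho(0))]=2\rho^2(t_i-t_j)$. Hence
\[
\E\left(V_n^2(Z)\right)=\frac{2T_n}{n^2}\sum_{i,j=0}^{n-1}\rho^2\big((i-j)\Delta_n\big)=2\Delta_n\sum_{|k|<n}\Big(1-\frac{|k|}{n}\Big)\rho^2(k\Delta_n).
\]
I would then compare this with $\sigma^2=4\int_0^\infty\rho^2(r)dr=2\int_{\R}\rho^2(r)dr$, splitting the error into three pieces:
\[
\left|\E V_n^2(Z)-\sigma^2\right|\le \Big|2\Delta_n\sum_{|k|<n}\rho^2(k\Delta_n)-2\int_{-T_n}^{T_n}\rho^2\Big|+\frac{2\Delta_n}{n}\sum_{|k|<n}|k|\rho^2(k\Delta_n)+2\int_{|r|>T_n}\rho^2(r)dr.
\]

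The tail term is immediate from \eqref{behavior-rho}: $\int_{T_n}^\infty r^{-2\gamma}dr\le CT_n^{1-2\gamma}$, which is at most $C/T_n$ when $\gamma\ge1$ and equal to the announced rate when $\frac12<\gamma<1$.

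For the weighted term, note $\frac{\Delta_n|k|}{n}=\frac{|k|\Delta_n}{T_n}$, so it is a Riemann sum for $\frac{1}{T_n}\int_{-T_n}^{T_n}|r|\rho^2(r)dr$. I would bound it directly, using $\rho^2\le\rho(0)^2$ on $[0,m_0]$ and \eqref{behavior-rho} beyond $m_0$:
\[
\frac{1}{T_n}\,\Delta_n\sum_{k\Delta_n\le T_n}k\Delta_n\,(k\Delta_n)^{-2\gamma}\le \frac{C}{T_n}\Big(1+\int_{m_0}^{T_n}r^{1-2\gamma}dr\Big).
\]
This gives $C/T_n$ for $\gamma>1$, $C\log T_n/T_n$ for $\gamma=1$, and $CT_n^{1-2\gamma}$ for $\frac12<\gamma<1$. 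These are exactly the three cases of \eqref{variance-V(Z)}. The discrete-to-integral replacement costs only an $O(\Delta_n)$ error per unit length, which is absorbed by the same argument as in the Riemann-sum step below.

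The main step is the Riemann-sum error $\big|\Delta_n\sum_{k=0}^{n-1}\rho^2(k\Delta_n)-\int_0^{T_n}\rho^2\big|$. I would write it as $\sum_k\int_{k\Delta_n}^{(k+1)\Delta_n}|\rho^2(k\Delta_n)-\rho^2(r)|dr$ and apply the mean value theorem, with $|(\rho^2)'|=2|\rho\rho'|$. This bounds the error by $\Delta_n\int_0^{T_n}2\sup_{|s-r|\le\Delta_n}|\rho(s)\rho'(s)|\,dr$. On $[0,m_0+1]$, continuity of $\rho$ and $\rho'$ gives a bounded integrand. For $r\ge m_0$, \eqref{behavior-rho} gives $|\rho\rho'|\le Cr^{-2\gamma}$, which is integrable since $2\gamma>1$. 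The total is therefore $C\Delta_n$, uniformly in $n$.

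The hardest part is exactly this uniform control. The continuity hypothesis on $|\rho'|$ near $0$ and the assumption $\gamma>\frac12$ are what make the derivative bound integrable; I would handle this part carefully. I would also check that the Riemann error inside the weighted sum, with its extra factor $|k|\Delta_n/T_n\le1$, is still $O(\Delta_n)$. Collecting the three pieces yields \eqref{variance-V(Z)}.
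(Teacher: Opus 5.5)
Your proposal is correct and follows essentially the paper's route. Wick's formula gives $\E(V_n^2(Z))=2\Delta_n\sum_{|k|<n}(1-\frac{|k|}{n})\rho^2(k\Delta_n)$, and the error then splits into three pieces: a mean-value-theorem Riemann-sum error of order $\Delta_n$, the tail $\int_{T_n}^\infty\rho^2\le CT_n^{1-2\gamma}$, and the $|k|/n$-weighted sum, which produces the three regimes. The paper regroups via $j=\lfloor k\Delta_n\rfloor$ where you compare with integrals, which is cosmetic. Your normalization $\sigma^2=4\int_0^\infty\rho^2=2\int_{\R}\rho^2$ is also the one the paper's proof actually uses, although Assumption $(\mathcal{A})$ as printed writes $4\int_{\R}\rho^2$.
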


\begin{proof}
Using Wick's formula, we obtain
\begin{align}\E\left(Z_{t}^{2}Z_{s}^{2}\right)=\E\left(Z_{t}^{2}\right)\E\left(Z_{s}^{2}\right)+2\left(\E\left(Z_{t}Z_{s}\right)\right)^2
=\rho^2(0)+2\rho^2(t-s).\label{wick-eq}\end{align} This yields
\begin{align}
\E\left(V_n^2(Z)\right)&=T_n\left[\E v_n^2(Z)-2\rho(0)\E v_n(Z)+\rho^2(0)\right]=T_n\left[\E v_n^2(Z)-\rho^2(0)\right] \nonumber\\
&= T_n\left[\frac{1}{n^2} \sum_{i,j =0}^{n-1} \E\left(Z_{t_{i}}^{2}Z_{t_{j}}^{2}\right)-\rho^2(0)\right] = T_n\left[\frac{2}{n^2} \sum_{i,j =0}^{n-1}
\rho^2\left(t_{j}-t_{i}\right)\right] \nonumber\\
&= \frac{2\Delta_n}{n} \sum_{i,j =0}^{n-1}
\rho^2\left((j-i)\Delta_n\right)=  2\Delta_n \sum_{|k|<n}
\rho^2\left(k\Delta_n\right)\left(1-\frac{|k|}{n}\right)\nonumber\\
&=  2\Delta_n \sum_{|k|<n}
\rho^2\left(k\Delta_n\right)-2\Delta_n \sum_{|k|<n}
\rho^2\left(k\Delta_n\right) \frac{|k|}{n}\nonumber\\
&=-2\Delta_n\rho^2(0)+  4\Delta_n \sum_{k=0}^{n-1}
\rho^2\left(k\Delta_n\right)-4 \sum_{k=1}^{n-1} \frac{k\Delta_n}{n}
\rho^2\left(k\Delta_n\right)\nonumber\\
&=:-2\Delta_n\rho^2(0)+  A_n+B_n.\label{estimate1}
\end{align}
Furthermore,
\begin{align}
  \left|A_n-4\int_{0}^{\infty}\rho^2(x) dx\right|&\leq \left|A_n-4\int_{0}^{T_n}\rho^2(x) dx\right|+4\int_{T_n}^{\infty}\rho^2(x) dx.\label{decomp-An}
\end{align}
Making the change of variable $u=x/T_n$ and using \eqref{behavior-rho}, we can deduce that
\begin{align}
  \int_{T_n}^{\infty}\rho^2(x) dx\leq C T_n^{1-2\gamma}.\label{bound-integ-rho}
\end{align}
On the other hand, by    the Mean Value Theorem, there exist  $\xi_{k,x}\in(t_k,t_{k+1})$, $k=0,\ldots, n-1$, such that
\begin{align*}
  \left|A_n-4\int_{0}^{T_n}\rho^2(x) dx\right|&= 4 \left|\sum_{k=0}^{n-1}\int_{t_k}^{t_{k+1}}\left[\rho^2\left(t_k\right)-\rho^2(x)\right] dx\right|\\
  &\leq 8\sum_{k=0}^{n-1}\int_{t_k}^{t_{k+1}}(x-t_k)\left|\rho\left(\xi_{k,x}\right)\rho'\left(\xi_{k,x}\right)\right| dx\\
     &\leq 8\sum_{\overset{0\leq k\leq n-1}{t_k<m_0}}
  \int_{t_k}^{t_{k+1}}(x-t_k)\left|\rho\left(\xi_{k,x}\right)\rho'\left(\xi_{k,x}\right)\right| dx
 \\&\quad + 8\sum_{\overset{0\leq k\leq n-1}{t_k\geq m_0}} \int_{t_k}^{t_{k+1}}(x-t_k)\left|\rho\left(\xi_{k,x}\right)\rho'\left(\xi_{k,x}\right)\right| dx.
\end{align*}
Moreover, using \eqref{behavior-rho} and $|\rho(t)\rho'(t)|$ is continuous, we get, for every $n>1$ such that $T_n>m_0$,
\begin{align*}
  \left|A_n-4\int_{0}^{T_n}\rho^2(x) dx\right|  
   &\leq  C\sum_{\overset{0\leq k\leq n-1}{t_k<m_0}}
  \int_{t_k}^{t_{k+1}}(x-t_k) dx
  + C\sum_{\overset{0\leq k\leq n-1}{t_k\geq m_0}} t_k^{-2\gamma} \int_{t_k}^{t_{k+1}}(x-t_k)   dx
  \nonumber\\
   &=  C\sum_{0\leq k< \frac{m_0}{\Delta_n}}
  \frac{\Delta_n^2}{2}
  + C\sum_{\frac{m_0}{\Delta_n}\leq k  \leq n-1}  \frac{\Delta_n^2}{2} t_k^{-2\gamma} \nonumber\\ 
   &\leq  C\left( \Delta_n 
  +  \sum_{\frac{1}{\Delta_n}\leq k  \leq n-1}   \Delta_n^2  \left(\lfloor{k\Delta_n}\rfloor\right)^{-2\gamma}  \right).
\end{align*}
Next, making the change of variable $j=\lfloor{k\Delta_n}\rfloor$, then, since $\gamma>\frac12$, we obtain
\begin{align}
  \left|A_n-4\int_{0}^{T_n}\rho^2(x) dx\right|  
   &\leq  C\left( \Delta_n 
  +  \frac{1}{\Delta_n}\sum_{j=1}^{\lfloor{n\Delta_n}\rfloor}   \Delta_n^2 j^{-2\gamma} \right)\nonumber\\
  &\leq  C\Delta_n \left(1 
  +   \sum_{j=1}^{\infty}    j^{-2\gamma}\right)\nonumber\\
  &\leq C\Delta_n.\label{estimate-a_n}
\end{align}
Furthermore,  using \eqref{behavior-rho} and  $\rho^2(t)$ is continuous, we have,  for every    $n>1$ such that $T_n>m_0$, 
\begin{align*}
|B_n|&=4 \sum_{k=1}^{n-1} \frac{k\Delta_n}{n}
\rho^2\left(k\Delta_n\right)\\
&=4 \sum_{1\leq k< \frac{m_0}{\Delta_n}}  \frac{k\Delta_n}{n}
\rho^2\left(k\Delta_n\right)+\sum_{\frac{m_0}{\Delta_n}\leq k  \leq n-1}\frac{k\Delta_n}{n}
\rho^2\left(k\Delta_n\right)\\
&\leq C\left( \frac{1}{n\Delta_n}
+\sum_{\frac{m_0}{\Delta_n}\leq k  \leq n-1}\frac{\lfloor{k\Delta_n}\rfloor}{n}
 \left(\lfloor{k\Delta_n}\rfloor\right)^{-2\gamma}\right).
\end{align*}
Making the change of variable $j=\lfloor{k\Delta_n}\rfloor$, then, in view of  the fact that
  the series $\sum_{j=1}^{\infty}   j^{1-2\gamma} $ is convergent if and only if $\gamma>1$, we have
\begin{align}
|B_n|
&\leq \frac{C}{n\Delta_n}\left( 1
+ \sum_{j=1}^{\lfloor{n\Delta_n}\rfloor}j^{1-2\gamma}\right)\nonumber \\
&\leq  C\left\{ \begin{array}{ll} \frac{1}{T_n} & \text { if } \gamma>1, \\
&\\
\frac{\log(T_n)}{T_n} & \text { if } \gamma=1, \\
&\\
T_n^{1-2\gamma} & \text { if } \frac12<\gamma<1. \end{array} \right..\label{estimate-b_n}
\end{align}
Therefore, combining \eqref{estimate1}, \eqref{decomp-An}, \eqref{bound-integ-rho}, \eqref{estimate-a_n} and \eqref{estimate-b_n},  the estimate  \eqref{variance-V(Z)} follows.
\end{proof}

\begin{lemma}  There exists $C>0$ depending only on $\theta$ and $\rho(0)$   such that for large
$n$,
\begin{align}
|k_{3}(V_n(Z))|&\leq     C\left\{ \begin{array}{ll} \frac{1}{\sqrt{T_n}} & \text { if } \gamma>\frac23, \\
&\\
\frac{\log^2(T_n)}{\sqrt{T_n}}  & \text { if } \gamma=\frac23, \\
&\\
T_n^{\frac32-3 \gamma} & \text { if } \frac12<\gamma<\frac23, \end{array} \right.,\label{k3-estimator}
\end{align}
and
\begin{align} \left|k_{4}(V_n(Z))\right| &\leq  C\left\{ \begin{array}{ll} \frac{1}{T_n} & \text { if } \gamma>\frac34, \\
&\\
\frac{\log^3(T_n)}{T_n}  & \text { if } \gamma=\frac34, \\
&\\
T_n^{2-4 \gamma} & \text { if } \frac12<\gamma<\frac34, \end{array} \right.,.\label{k4-estimator}
\end{align}
As a consequence, \begin{align}
\max\left(|k_{3}(V_n(Z))|,\left|k_{4}(V_n(Z))\right|\right)&\leq     C\left\{ \begin{array}{ll} \frac{1}{\sqrt{T_n}} & \text { if } \gamma>\frac23, \\
&\\
\frac{\log^2(T_n)}{\sqrt{T_n}}  & \text { if } \gamma=\frac23, \\
&\\
T_n^{\frac32-3 \gamma} & \text { if } \frac12<\gamma<\frac23, \end{array} \right.,\label{max-k3-k4-estimator}
\end{align}
\end{lemma}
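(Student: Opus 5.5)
We write $V_n(Z)$ as a second Wiener chaos element and then apply the cumulant formulas \eqref{3rd-cumulant} and \eqref{4th-cumulant}. Since $Z_{t_i}^2-\rho(0)=I_2(\mathbf{1}_{[0,t_i]}^{\otimes 2})$, we have $V_n(Z)=I_2(h_n)$ with
\[
h_n:=\frac{\sqrt{T_n}}{n}\sum_{i=0}^{n-1}\mathbf{1}_{[0,t_i]}^{\otimes 2},
\]
and $\langle \mathbf{1}_{[0,t_i]},\mathbf{1}_{[0,t_j]}\rangle_{\HC}=\rho(t_i-t_j)$. The contractions can then be computed explicitly:
\[
\langle h_n,h_n\otimes_1 h_n\rangle=\frac{T_n^{3/2}}{n^3}\sum_{i,j,k}\rho(t_i-t_j)\rho(t_j-t_k)\rho(t_k-t_i),
\]
\[
\|h_n\otimes_1 h_n\|^2=\frac{T_n^{2}}{n^4}\sum_{i,j,k,l}\rho(t_i-t_j)\rho(t_j-t_k)\rho(t_k-t_l)\rho(t_l-t_i).
\]
We fix one index and set $k_1=j-i$, $k_2=k-j$ (and $k_3=l-k$ in the quartic case), with all $|k_m|<n$. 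Bounding in absolute value gives
\[
|\kappa_3|\le C\frac{T_n^{3/2}}{n^2}\sum_{|k_1|,|k_2|<n}|\rho(k_1\Delta_n)\rho(k_2\Delta_n)\rho((k_1+k_2)\Delta_n)|,
\]
and similarly $|\kappa_4|\le C\,T_n^2 n^{-3}\sum|\rho(k_1\Delta_n)\rho(k_2\Delta_n)\rho(k_3\Delta_n)\rho((k_1+k_2+k_3)\Delta_n)|$.

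Next we apply Lemma \ref{NZ-lemma} to the sequence $k\mapsto\rho(k\Delta_n)$, with $M=2$ for $\kappa_3$ and $M=3$ for $\kappa_4$. Its proof is a Young/Hölder convolution inequality, $\|a*b\|_{\ell^r}\le\|a\|_{\ell^p}\|b\|_{\ell^q}$ with all exponents equal to $1+1/M$, and the constant does not depend on the sequence. This gives
\[
|\kappa_3|\le C\frac{T_n^{3/2}}{n^2}\Big(\sum_{|k|<n}|\rho(k\Delta_n)|^{3/2}\Big)^2,\qquad |\kappa_4|\le C\frac{T_n^{2}}{n^3}\Big(\sum_{|k|<n}|\rho(k\Delta_n)|^{4/3}\Big)^3 .
\]
Writing $n=T_n/\Delta_n$ turns these into $C\,T_n^{-1/2}\big(\Delta_n\sum_{|k|<n}|\rho(k\Delta_n)|^{3/2}\big)^2$ and $C\,T_n^{-1}\big(\Delta_n\sum_{|k|<n}|\rho(k\Delta_n)|^{4/3}\big)^3$.

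The remaining step, which is the main one, is to estimate the Riemann sums $S_p:=\Delta_n\sum_{|k|<n}|\rho(k\Delta_n)|^p$ for $p=3/2$ and $p=4/3$. We split them exactly as in the proof of \eqref{variance-V(Z)}. The terms with $k\Delta_n<m_0$ number $O(1/\Delta_n)$ and $|\rho|\le\rho(0)$ is bounded, so they contribute $O(1)$. For $k\Delta_n\ge m_0$ we use \eqref{behavior-rho} and group the indices by $j=\lfloor k\Delta_n\rfloor$: each $j$ collects about $1/\Delta_n$ values of $k$, so
\[
\Delta_n\sum_{k\Delta_n\ge m_0}|\rho(k\Delta_n)|^p\le C\sum_{j=1}^{\lfloor T_n\rfloor}j^{-p\gamma}.
\]
This sum is $O(1)$ if $p\gamma>1$, $O(\log T_n)$ if $p\gamma=1$, and $O(T_n^{1-p\gamma})$ if $p\gamma<1$.

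For $p=3/2$ the threshold is $\gamma=2/3$. Squaring and multiplying by $T_n^{-1/2}$ gives the three cases of \eqref{k3-estimator}, since $T_n^{-1/2}T_n^{2-3\gamma}=T_n^{3/2-3\gamma}$. For $p=4/3$ the threshold is $\gamma=3/4$. Cubing and multiplying by $T_n^{-1}$ gives \eqref{k4-estimator}, since $T_n^{-1}T_n^{3-4\gamma}=T_n^{2-4\gamma}$.

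Finally, \eqref{max-k3-k4-estimator} follows by comparing the two bounds case by case, using $T_n\to\infty$:
\begin{itemize}
\item For $\gamma>2/3$, the bound on $\kappa_4$ is at most $\max\big(T_n^{-1},\log^3(T_n)\,T_n^{-1},T_n^{2-4\gamma}\big)$, and $2-4\gamma<-1/2$ there, so all of these are $o(T_n^{-1/2})$.
\item For $\gamma\le 2/3$, we have $2-4\gamma\le 3/2-3\gamma$ because $\gamma\ge1/2$, so the bound on $\kappa_3$ dominates.
\end{itemize}
The care needed lies in tracking the borderline logarithms. Because every constant above depends only on $\rho$ (that is, on $\theta$ and $\rho(0)$), the constant $C$ in the lemma does too.
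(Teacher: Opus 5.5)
Your proposal is correct and follows the paper's proof essentially step by step. The ingredients are the same: the representation $V_n(Z)=I_2(h_n)$ (your $h_n$ coincides with the paper's $\varepsilon_n$, since $\sqrt{T_n}/n=\sqrt{\Delta_n/n}$), the cumulant formulas \eqref{3rd-cumulant}--\eqref{4th-cumulant}, Lemma \ref{NZ-lemma} with $M=2$ and $M=3$ applied to $k\mapsto\rho(k\Delta_n)$, and the Riemann-sum estimate obtained by splitting at $k\Delta_n=m_0$ and grouping indices by $j=\lfloor k\Delta_n\rfloor$. Writing the bounds as $T_n^{-1/2}S_{3/2}^2$ and $T_n^{-1}S_{4/3}^3$, and checking the maximum bound case by case, are only presentational additions; the paper leaves that final comparison implicit.
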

\begin{proof} 
Using   \eqref{eq:product}, $V_n(Z)$ can be
expressed as
\begin{align}
V_n(Z) = \sqrt{\frac{\Delta_n}{n}} \sum_{i =0}^{n-1}
I_2(\mathbf{1}_{[0, t_i]}^{\otimes 2})=I_2\left(\sqrt{\frac{\Delta_n}{n}}
\sum_{i =0}^{n-1} \mathbf{1}_{[0, t_i]}^{\otimes 2}\right)=:
I_2(\varepsilon_n).\label{V_n(Z) in 2nd chaos}
\end{align}
In view of $\mathbf{1}_{[0, s]}^{\otimes 2} \otimes_{1} \mathbf{1}_{[0,
t]}^{\otimes 2}=\left\langle\mathbf{1}_{[0, s]}, \mathbf{1}_{[0,
t]}\right\rangle_{\mathcal{H}} \mathbf{1}_{[0, s]} \otimes
\mathbf{1}_{[0, t]}=\rho(t-s) \mathbf{1}_{[0, s]} \otimes
\mathbf{1}_{[0, t]},$ we can write
\begin{eqnarray}
\varepsilon_n\otimes_1 \varepsilon_n=\frac{\Delta_n}{n}
\sum_{i,j=0}^{n-1} \rho(t_j-t_i)\mathbf{1}_{[0, t_i]} \otimes
\mathbf{1}_{[0, t_j]}.\label{contaction}
\end{eqnarray}
Using \eqref{3rd-cumulant}, \eqref{V_n(Z) in 2nd
chaos} and \eqref{contaction},  we can deduce 
\begin{eqnarray*}
k_{3}(V_n(Z))=k_{3}(I_2(\varepsilon_n))
&=&8\left<\varepsilon_n,\varepsilon_n\otimes_1
\varepsilon_n\right>_{\mathcal{H}^{\otimes 2}}\nonumber\\
&=&\frac{\Delta_n^{3/2}}{n^{3/2}} \sum_{i,j,k=0}^{n-1}
\rho(t_j-t_i)\rho(t_i-t_k)\rho(t_k-t_j)\nonumber\\
&=&\frac{\Delta_n^{3/2}}{n^{3/2}} \sum_{i,j,k=0}^{n-1}
\rho((i-j)\Delta_n)\rho((i-k)\Delta_n)\rho((j-k)\Delta_n).\nonumber
\end{eqnarray*}
After making the change of variables $i=i, k_1=i-j, k_2=i-k$ and using Lemma \ref{NZ-lemma} with $M=2$ and $v=(-1,1)$, we obtain
\begin{eqnarray}
\left|k_{3}(V_n(Z))\right|
&\leq&\frac{\Delta_n^{3/2}}{n^{1/2}} \sum_{\left|k_1\right|,\left|k_2\right| < n}\left|\rho\left(k_1\Delta_n\right) \rho\left(k_2\Delta_n\right) \rho\left((k_2-k_1)\Delta_n\right)\right|\nonumber \\&\leq& \frac{C\Delta_n^{3/2}}{n^{1/2}}\left(\sum_{|k| < n}|\rho(k\Delta_n)|^{3/2}\right)^2 .\label{k3-estimate1}
\end{eqnarray}
Furthermore, in view of \eqref{behavior-rho} and   $|\rho^2(t)|$ is continuous, we have,  for every   $n>1$ such that $n\Delta_n>m_0$, 
\begin{align*}
\sum_{|k| < n}|\rho(k\Delta_n)|^{3/2}&=\sum_{1\leq k< \frac{m_0}{\Delta_n}} |\rho(k\Delta_n)|^{3/2}+\sum_{\frac{m_0}{\Delta_n}\leq k  \leq n-1} |\rho(k\Delta_n)|^{3/2}\nonumber\\
&\leq C\left(\sum_{1\leq k< \frac{m_0}{\Delta_n}} 1 +\sum_{\frac{m_0}{\Delta_n}\leq k  \leq n-1} |\rho(k\Delta_n)|^{3/2}\right)\nonumber
\\
&\leq C\left(\sum_{1\leq k< \frac{m_0}{\Delta_n}} 1 +\sum_{\frac{m_0}{\Delta_n}\leq k  \leq n-1} |k\Delta_n|^{-\frac32 \gamma}\right)\nonumber
\\
&\leq  C\left( \frac{1}{\Delta_n} +\sum_{\frac{m_0}{\Delta_n}\leq k  \leq n-1} \left(\lfloor{k\Delta_n}\rfloor\right)^{-\frac32 \gamma}\right)\nonumber\end{align*}
Now, making    the change of variable $j=\lfloor{k\Delta_n}\rfloor$ combined with  the fact that
  the series $\sum_{j=1}^{\infty}   j^{-\frac32 \gamma} $ is convergent if and only if $\gamma>\frac23$
\begin{align}\sum_{|k| < n}|\rho(k\Delta_n)|^{3/2}
&\leq  C\left( \frac{1}{\Delta_n} + \frac{1}{\Delta_n} \sum_{j=1}^{\lfloor{n\Delta_n}\rfloor} j^{-\frac32 \gamma}\right)\nonumber\\
 &\leq \frac{C}{\Delta_n}\times \left\{ \begin{array}{ll} 1 & \text { if } \gamma>\frac23, \\
&\\
\log(T_n)  & \text { if } \gamma=\frac23, \\
&\\
T_n^{1-\frac32 \gamma} & \text { if } \frac12<\gamma<\frac23. \end{array} \right.\label{k3-estimate2}
\end{align}
Thus, using \eqref{k3-estimate1} and \eqref{k3-estimate2},  the estimate  \eqref{k3-estimator} holds.\\
On the other hand, using \eqref{4th-cumulant}, \eqref{V_n(Z) in 2nd chaos} and
 \[\notag \mathbf{1}_{[0, s]}^{\otimes 2} \otimes_1 \mathbf{1}_{[0, t]}^{\otimes 2} =
\langle{\mathbf{1}_{[0, s]}},{\mathbf{1}_{[0, t]}}\rangle_{\mathcal{H}} \mathbf{1}_{[0, s]} \otimes \mathbf{1}_{[0, t]}=
\E[ Z_s Z_t]\mathbf{1}_{[0, s]} \otimes \mathbf{1}_{[0, t]},\] we obtain
\begin{eqnarray*} \left|k_{4}(V_n(Z))\right|
&\leq&48\|\varepsilon_n\otimes_1
\varepsilon_n\|_{\mathcal{H}^{\otimes
2}}^2\\&=&48\frac{\Delta_n^2}{n^{2}}\sum_{k_{1},k_{2},k_{3},k_{4}=0}^{n-1}
\left\langle{\mathbf{1}_{[0, t_{k_{1}}]}^{\otimes 2} \otimes_1
\mathbf{1}_{[0, t_{k_{2}}]}^{\otimes 2}}
{\mathbf{1}_{[0, t_{k_{3}}]}^{\otimes 2} \otimes_1 \mathbf{1}_{[0, t_{k_{4}}]}^{\otimes 2}}\right\rangle_{\mathcal{H}^{\otimes 2}}\\
&=&48\frac{\Delta_n^2}{n^{2}}\sum_{k_{1},k_{2},k_{3},k_{4}=0}^{n-1}
\E[ Z_{t_{k_{1}}} Z_{t_{k_{2}}}]\E[ Z_{t_{k_{3}}} Z_{t_{k_{4}}}]\E[
Z_{t_{k_{1}}} Z_{t_{k_{3}}}]\E[ Z_{t_{k_{2}}} Z_{t_{k_{4}}}]
 \\
&=&48\frac{\Delta_n^2}{n^{2}}\sum_{k_{1},k_{2},k_{3},k_{4}=0}^{n-1}
\rho(t_{k_{1}}-t_{k_{2}}) \rho(t_{k_{3}}-t_{k_{4}})
\rho(t_{k_{1}}-t_{k_{3}}) \rho(t_{k_{2}}-t_{k_{4}}),\\
&= &48\frac{\Delta_n^2}{n^{2}}
\sum_{k_{1},k_{2},k_{3},k_{4}=0}^{n-1}\rho((k_{1}-k_{2})\Delta_n)\rho((k_{3}-k_{4})\Delta_n)
\rho((k_{1}-k_{3}))\Delta_n)\rho((k_{2}-k_{4}))\Delta_n).
\end{eqnarray*}
Furthermore, making the change of variables $k_{1}-k_{2}=j_{1},
k_{2}-k_{4}=j_{2}$ and $k_{3}-k_{4}=j_{3}$, and then applying
 Lemma \ref{NZ-lemma}, we get
\begin{eqnarray}\left|k_{4}(V_n(Z))\right|&\leq&48\frac{\Delta_n^2}{n}
\sum_{\underset{i=1,2,3}{\left|j_{i}\right| <
n}}\left|\rho\left(j_{1}\Delta_n\right)
\rho\left(j_{2}\Delta_n\right)
\rho\left(j_{3}\Delta_n\right) \rho\left((j_{1}+j_{2}-j_{3})\Delta_n\right)\right|\nonumber\\
&\leq&C\frac{\Delta_n^2}{n}\left(\sum_{|k| <
n}|\rho(k\Delta_n)|^{\frac{4}{3}}\right)^{3}. \label{k4-estimate1}
\end{eqnarray}%
Moreover, by similar arguments as in the proof
of \eqref{k3-estimate2}, we get, for every   $n>1$ such that $T_n>m_0$, 
\begin{align}
\sum_{|k| <
n}|\rho(k\Delta_n)|^{\frac{4}{3}}
&\leq  C\left( \frac{1}{\Delta_n} + \frac{1}{\Delta_n} \sum_{j=1}^{\lfloor{n\Delta_n}\rfloor} j^{-\frac{4}{3}\gamma}\right)\nonumber\\
 &\leq C\left\{ \begin{array}{ll}  \frac{1}{\Delta_n} & \text { if } \gamma>\frac34, \\
&\\
 \frac{\log(T_n)}{\Delta_n}  & \text { if } \gamma=\frac34, \\
&\\ \frac{T_n^{1-\frac43 \gamma}}{\Delta_n}
 & \text { if } \frac12<\gamma<\frac34. \end{array} \right..\label{k4-estimate2}
\end{align}
Therefore,  using \eqref{k4-estimate1} and \eqref{k4-estimate2}, the estimate \eqref{k4-estimator} follows.
\end{proof}

\begin{lemma}\label{key-lemma1} Define $M_T:=F_T+ G_T$, $T>0$, where, for any $T>0$, $F_T$ and  $G_T$ are in $\mathcal{H}_2$, $\E (F_T^2)\rightarrow c>0$ and $\E (G_T^2)\rightarrow  0$, as $T\rightarrow\infty$. Then, there exists a constant $C>0$ independent of $T$  such that, for  $T$ sufficiently  large,		
		\begin{align} 
			d_{Kol}\left(\frac{M_T}{\sqrt{\E M_T^2}},\mathcal{N}(0, 1)\right) \leq  C d_{Kol}\left(\frac{F_T}{\sqrt{\E F_T^2}},\mathcal{N}(0, 1)\right) +C\|G_T\|_{L^2(\Omega)}.\label{kol-M}
		\end{align}
	\end{lemma}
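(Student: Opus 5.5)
Let $\sigma_F^2:=\E F_T^2$, $\sigma_M^2:=\E M_T^2$ and $N\sim\mathcal{N}(0,1)$. We split
\[
\frac{M_T}{\sigma_M}=\frac{\sigma_F}{\sigma_M}\,\frac{F_T}{\sigma_F}+\frac{G_T}{\sigma_M},
\]
so the claim reduces to two steps: (a) a scaling step comparing $\frac{\sigma_F}{\sigma_M}\frac{F_T}{\sigma_F}$ with $N$, and (b) a perturbation step absorbing $G_T/\sigma_M$ in Kolmogorov distance at cost $C\|G_T\|_{L^2(\Omega)}$. Since $\sigma_F^2\to c>0$ and $\E G_T^2\to0$, Cauchy--Schwarz gives $|\sigma_M^2-\sigma_F^2|\le 2\sigma_F\|G_T\|_{L^2}+\|G_T\|_{L^2}^2\le C\|G_T\|_{L^2}$. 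Hence, for $T$ large, $\sigma_M\ge \sqrt{c}/2$ and $|\sigma_F/\sigma_M-1|\le C\|G_T\|_{L^2}$.

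\emph{Scaling step.} Kolmogorov distance is invariant under positive dilations, so $d_{Kol}(aY,N)=d_{Kol}(Y,N/a)\le d_{Kol}(Y,N)+d_{Kol}(N,N/a)$. The elementary Gaussian estimate $\sup_z|\Phi(z)-\Phi(az)|\le C|a-1|$, valid for $a$ in a compact subset of $(0,\infty)$, then gives
\[
d_{Kol}\Big(\tfrac{\sigma_F}{\sigma_M}\tfrac{F_T}{\sigma_F},N\Big)\le d_{Kol}\Big(\tfrac{F_T}{\sigma_F},N\Big)+C\|G_T\|_{L^2}.
\]

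\emph{Perturbation step.} This is the main obstacle. The crude bound $P(A+B\le z)\le P(A\le z+\varepsilon)+P(|B|>\varepsilon)$ with Chebyshev only yields $\|G_T\|_{L^2}^{2/3}$, or $\|G_T\|^{1-\delta}$ after using hypercontractivity \eqref{hypercontractivity} on the second chaos. To reach the linear rate, I would instead work with the Malliavin--Stein Kolmogorov bound for a centered $W\in\mathcal{H}_2$ with unit variance:
\[
d_{Kol}(W,N)\le C\,\E\big|1-\tfrac12\|DW\|_{\HC}^2\big|,
\]
which in the second chaos is comparable to $\sqrt{\kappa_4(W)}$. Take $W=M_T/\sigma_M$. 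Since $DM_T=DF_T+DG_T$,
\[
\|DM_T\|^2-\|DF_T\|^2=2\langle DF_T,DG_T\rangle+\|DG_T\|^2,
\]
and Cauchy--Schwarz together with hypercontractivity ($\E\|DG\|^2=2\E G^2$ on $\mathcal{H}_2$) bounds the expectation of the right-hand side by $C\|G_T\|_{L^2}$. This gives
\[
d_{Kol}(M_T/\sigma_M,N)\le C\,\E\big|1-\tfrac12\|D(F_T/\sigma_F)\|^2\big|+C\|G_T\|_{L^2}.
\]

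The remaining issue is that the first term on the right is controlled by $\sqrt{\kappa_4(F_T/\sigma_F)}$, and this must be compared with $d_{Kol}(F_T/\sigma_F,N)$. For $F_T\in\mathcal{H}_2$, the optimal fourth moment theorem \eqref{dTV-optimal berry esseen} gives $d_{TV}\asymp\max(|\kappa_3|,\kappa_4)$, and this comparability should carry over to $d_{Kol}$ in the second chaos. Failing that, I would bound the derivative-based quantity directly by $\max(|\kappa_3|,\kappa_4)$ via the Nourdin--Peccati optimal bound for Kolmogorov distance, and then use the lower bound $d_{Kol}\ge c\max(|\kappa_3|,\kappa_4)$. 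Establishing this lower bound for Kolmogorov, rather than total variation, is the most delicate point. I would argue it through the cumulant/eigenvalue representation of second-chaos variables, $F=\sum\lambda_i(N_i^2-1)$, where a smoothing argument converts $d_{Kol}$ into a smooth-test-function distance that detects $\kappa_3$ and $\kappa_4$. Combining this with the scaling step completes \eqref{kol-M}.
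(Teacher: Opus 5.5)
Your setup, the scaling step, and the intermediate estimate $d_{Kol}(M_T/\sigma_M,N)\le C\,\E\bigl|1-\tfrac12\|D(F_T/\sigma_F)\|_{\HC}^2\bigr|+C\|G_T\|_{L^2(\Omega)}$ are all correct. The gap is that this estimate cannot be completed: routing the perturbation through the first-order Malliavin--Stein discrepancy loses a square root that no later comparison recovers. For $W=I_2(f)$ with $\E W^2=1$ one has $1-\tfrac12\|DW\|_{\HC}^2=-2I_2(f\otimes_1 f)$. By hypercontractivity, $\E\bigl|1-\tfrac12\|DW\|_{\HC}^2\bigr|\asymp\|f\otimes_1 f\|\asymp\sqrt{\kappa_4(W)}$, whereas $d_{Kol}(W,N)\le d_{TV}(W,N)\le C\max(|\kappa_3(W)|,\kappa_4(W))$. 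These can differ by a full square root. Take $W_n=\frac{1}{2\sqrt n}\sum_{i=1}^n(\xi_i^2-\xi_i'^2)$ with $\xi_i,\xi_i'$ i.i.d.\ standard Gaussian. Then $\kappa_3(W_n)=0$ and $\kappa_4(W_n)=6/n$, so $d_{Kol}(W_n,N)=O(1/n)$. On the other hand, $\tfrac12\|DW_n\|_{\HC}^2=\frac{1}{2n}\sum_i(\xi_i^2+\xi_i'^2)$, so the Stein discrepancy is of exact order $n^{-1/2}$. With $F_T=W_n$ and $\|G_T\|_{L^2}\asymp 1/n$, your chain yields only $O(n^{-1/2})$ where the lemma asserts $O(1/n)$. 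The same computation rules out your fallback of bounding ``the derivative-based quantity directly by $\max(|\kappa_3|,\kappa_4)$'': the optimal Nourdin--Peccati bound controls the distance itself, not $\E|1-\tfrac12\|DW\|^2_{\HC}|$, which is genuinely of order $\sqrt{\kappa_4}$.

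The missing idea, which is the paper's route, is to perturb the cumulants rather than $\|DW\|^2_{\HC}$. Expand $\E(F_T+G_T)^p$ for $p=3,4$ and bound each cross term by H\"older and hypercontractivity (e.g.\ $|\E F_T^3G_T|\le\|F_T\|_{L^6}^3\|G_T\|_{L^2}$). Together with $|\sigma_M^2-\sigma_F^2|\le C\|G_T\|_{L^2}$, this gives $|\kappa_j(M_T/\sigma_M)-\kappa_j(F_T/\sigma_F)|\le C\|G_T\|_{L^2}$ for $j=3,4$. The upper half of the optimal fourth moment theorem applied to $M_T/\sigma_M$, plus $d_{Kol}\le d_{TV}$, then gives $d_{Kol}(M_T/\sigma_M,N)\le C\max(|\kappa_3(F_T/\sigma_F)|,\kappa_4(F_T/\sigma_F))+C\|G_T\|_{L^2}$. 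This is linear in $\|G_T\|$ and needs no separate scaling step. To turn the cumulants into $d_{Kol}(F_T/\sigma_F,N)$ one still needs $\max(|\kappa_3|,\kappa_4)\le C\,d_{Kol}$, and you are right that this is delicate. The paper's proof cites only the total-variation lower bound and $d_{Kol}\le d_{TV}$, which points the wrong way, so it does not really justify this step either. Your smoothing idea is a sensible way to supply it: use test functions $h$ with $h'\in L^1$, for which $|\E h(F)-\E h(N)|\le\|h'\|_{L^1}\,d_{Kol}(F,N)$, chosen to detect $\kappa_3$ and $\kappa_4$. Note also that the lemma's later use, in Theorem~\ref{rate-CLT-Vn}, needs only the cumulant form of the right-hand side.
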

\begin{proof}
Using  the binomial expansion and \eqref{hypercontractivity}, straightforward calculations yield, for  $T$ sufficiently  large,
\begin{eqnarray}
\left|\kappa_{4}\left(\frac{M_T}{\sqrt{\E M_T^2}}\right)\right|&\leq&C\left|\mathbb{E}(M_T^4)-3(\mathbb{E}(M_T^2))^2\right|\nonumber\\
&\leq&C\left|\mathbb{E}(F_T^4)-3(\mathbb{E}(F_T^2))^2\right|
+C\|G_T\|_{L^2(\Omega)}\nonumber\\
&\leq&\left|\kappa_{4}\left(F_T\right)\right|+C\|G_T\|_{L^2(\Omega)}.\label{4cumulant-M}
\end{eqnarray}
Similarly,  we have, for  $T$ sufficiently  large,
\begin{eqnarray}
\left|\kappa_{3}\left(\frac{M_T}{\sqrt{\E M_T^2}}\right)\right|
&\leq&\left|\kappa_{3}\left(F_T\right)\right|+C\|G_T\|_{L^2(\Omega)}.\label{3cumulant-M}
\end{eqnarray}
Therefore, in view of \eqref{dTV-optimal berry esseen}, \eqref{dKol:dTV}, \eqref{4cumulant-M} and \eqref{3cumulant-M}, we obtain \eqref{kol-M}.
\end{proof}
\begin{theorem}\label{rate-CLT-Vn}
Denote  
\begin{align}\psi_n&:= \Delta_n+\left\{ \begin{array}{ll} \frac{1}{\sqrt{T_n}} & \text { if } \gamma\geq\frac34, \\
&\\
T_n^{1-2 \gamma} & \text { if } \frac12<\gamma<\frac34. \end{array} \right.\label{defi-psi}
\end{align} Assume that the assumption  $\mathbf{\left(\mathcal{A}\right)}$ holds. Then, there exists $C>0$ depending only on $\theta$  and $\rho(0)$  such that for large
$n\geq1$,
\begin{eqnarray}
d_{TV}\left(\frac{1}{\sigma}V_n(X),\mathcal{N}(0,1)\right) 
&\leq&  \frac{C}{T_n^{\frac14}}+C\psi_n,\label{dTV-V(X)}
\end{eqnarray}
\begin{eqnarray}
d_{Kol}\left(\frac{1}{\sigma}V_n(X),\mathcal{N}(0,1)\right) 
&\leq&  \frac{C}{\sqrt{T_n}}+C\psi_n,\label{dKol-V(X)}
\end{eqnarray}
and
\begin{eqnarray}
d_{W}\left(\frac{1}{\sigma}V_n(X),\mathcal{N}(0,1)\right) 
&\leq&  \frac{C}{\sqrt{T_n}}+C\psi_n.\label{dW-V(X)}
\end{eqnarray}
\end{theorem}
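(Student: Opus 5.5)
We first treat $V_n(Z)$, which by \eqref{V_n(Z) in 2nd chaos} is the double integral $I_2(\varepsilon_n)$, and only afterwards pass to $V_n(X)$ through \eqref{error X-Z}.

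\textbf{Step 1: normalized version.} By the variance estimate \eqref{variance-V(Z)}, $\E V_n^2(Z)\to\sigma^2>0$, so for large $n$ the variance is bounded away from zero. Cumulants of order three and four scale homogeneously. Hence \eqref{dTV-optimal berry esseen} together with \eqref{max-k3-k4-estimator} gives
\[
d_{TV}\left(\frac{V_n(Z)}{\sqrt{\E V_n^2(Z)}},\mathcal{N}(0,1)\right)\le C\max\left(|\kappa_3(V_n(Z))|,|\kappa_4(V_n(Z))|\right).
\]
In every regime of $\gamma$ this is dominated by $C/\sqrt{T_n}+CT_n^{1-2\gamma}$. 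For $\gamma=2/3$ this uses $\log^2(T_n)/\sqrt{T_n}\le CT_n^{-1/3}$, and for $\gamma<2/3$ it uses $\tfrac32-3\gamma<1-2\gamma$. So the bound is at most $C/\sqrt{T_n}+C\psi_n$. The same bound holds for $d_W$ via \eqref{dW-fourth-cumulant-thm}, and for $d_{Kol}$ via \eqref{dKol:dTV}.

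\textbf{Step 2: change of scale.} Next we replace $\sqrt{\E V_n^2(Z)}$ by $\sigma$. For a standard normal $N$ and $s>0$, we have $d_{TV}(sN,N)\le C|s^2-1|$, and the same holds for $d_{Kol}$ and $d_W$. The triangle inequality, combined with the scaling $d(aF,aG)$ (invariant for $d_{TV}$ and $d_{Kol}$, and multiplied by $a$ for $d_W$), then gives
\[
d\left(\frac{V_n(Z)}{\sigma},N\right)\le d\left(\frac{V_n(Z)}{\sqrt{\E V_n^2(Z)}},N\right)+C\left|\E V_n^2(Z)-\sigma^2\right|.
\]
By \eqref{variance-V(Z)} the last term is at most $C\Delta_n+CT_n^{1-2\gamma}$ when $\gamma<1$, and at most $C\Delta_n+C\log(T_n)/T_n$ when $\gamma\ge1$. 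Both are bounded by $C\psi_n$. The check for $\gamma\in[3/4,1)$ is that $T_n^{1-2\gamma}\le T_n^{-1/2}$.

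\textbf{Step 3: from $Z$ to $X$.} This is where the three distances behave differently, and it is the main obstacle. The difference $V_n(X)-V_n(Z)$ lives in chaoses $0$ and $2$, but it is not centered, so a little care is needed.

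For $d_W$, Lipschitz test functions and \eqref{error X-Z} give directly
\[
d_W\left(\frac{V_n(X)}{\sigma},\frac{V_n(Z)}{\sigma}\right)\le C\|V_n(X)-V_n(Z)\|_{L^1}\le \frac{C}{\sqrt{T_n}},
\]
which yields \eqref{dW-V(X)}.

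For $d_{Kol}$, I would write $V_n(X)=V_n(Z)+G_n+c_n$. Here $G_n$ is the second-chaos part of the difference, with $\|G_n\|_{L^2}\le C/\sqrt{T_n}$, and $c_n=\E[V_n(X)]$ is a constant with $|c_n|\le C/\sqrt{T_n}$. Lemma~\ref{key-lemma1} applied to $F=V_n(Z)$ and $G=G_n$ handles the chaos part. Normalizing by $\sqrt{\E(V_n(Z)+G_n)^2}$ instead of $\sigma$ costs at most $C\psi_n+C/\sqrt{T_n}$, by Step 2 and Cauchy–Schwarz. The shift by $c_n$ costs at most $C|c_n|$, because the normal density is bounded. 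Together these give \eqref{dKol-V(X)}.

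For $d_{TV}$ no such Lipschitz or shift argument is available, so I would use the standard interpolation bound for random variables in a finite sum of chaoses with nondegenerate variance,
\[
d_{TV}(F,G)\le C\|F-G\|_{L^2}^{1/2}.
\]
This bound follows from Carbery–Wright / Nourdin–Nualart–Poly type smoothness estimates. Applied with \eqref{error X-Z}, it gives the rate $T_n^{-1/4}$, which explains the exponent in \eqref{dTV-V(X)}. Combining it with Steps 1–2 via the triangle inequality finishes the proof.
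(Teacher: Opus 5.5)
Your proposal is correct and follows the paper's proof step by step. The ingredients match: the optimal fourth moment theorem plus a Gaussian rescaling step for $V_n(Z)$, the $L^1$ coupling for $d_W$, Lemma~\ref{key-lemma1} with the mean-shift inequality \eqref{ineq-mean-dKol} for $d_{Kol}$, and an $L^2$-to-total-variation bound with exponent $1/2$ for $d_{TV}$. In the $d_{Kol}$ step you are actually more careful than the paper about renormalizing by $\sqrt{\E(V_n(Z)+G_n)^2}$ rather than by $\sigma$.

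The one thing to fix is the attribution in the $d_{TV}$ step. The inequality $d_{TV}(F,G)\le C\|F-G\|_{L^2(\Omega)}^{1/2}$ for variables in chaoses of order at most two with nondegenerate variance is Kosov's Theorem 3.5, which is what the paper invokes. As far as I recall, the Fortet--Mourier interpolation of Nourdin--Nualart--Poly only gives a smaller exponent in this setting, which would yield a rate worse than $T_n^{-1/4}$, so you should cite Kosov instead.
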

\begin{proof} First, we prove  \eqref{dTV-V(X)}. According to  \cite[Theorem 3.5]{kosov},   there is a positive constant $C>0$ such that, for all multiple integrals $F$ and $G$ of order 2,
\begin{eqnarray*}
d_{TV}\left(F,G\right) \leq C \left( \frac{ \E [ (F- G)^2]}{\E[ F^2 ] } \right)^{1/4}.
\end{eqnarray*}
Combining this result with \eqref{error X-Z} and \eqref{variance-V(Z)} yields 
\begin{align}
d_{TV}\left(\frac{1}{\sigma}V_n(X),\mathcal{N}(0,1)\right)&\leq d_{TV}\left(\frac{1}{\sigma}V_n(Z),\mathcal{N}(0,1)\right)
+\frac{1}{\sqrt{\sigma}[\E\left(V_n^2(Z)\right)]^{\frac14}}\left\|V_n(X)-V_n(Z)\right\|^{\frac12}_{L^2(\Omega)}\nonumber\\
&\leq d_{TV}\left(\frac{1}{\sigma}V_n(Z),\mathcal{N}(0,1)\right)+\frac{C}{T_n^{\frac14}}. \label{dTV-V(X)-V(Z)}
\end{align}
Now, we recall the  following technical result \cite[Lemma 5.1]{DEV}:
 Let  $\mu \in \R$ and $\sigma>0$. Then, for every integrable real-valued random variable $F$,
\begin{align}
  d_{TV}\left( \mu+\sigma F,\mathcal{N}\right) &\leq d_{TV}\left( F,\mathcal{N} \right)+ \sqrt{\frac{\pi}{2}}|\mu|+2\left|1-\frac{1}{\sigma^2}\right|.\label{upper-d_TV-lemma}
\end{align}
By \eqref{upper-d_TV-lemma}, we can write
\begin{align}d_{TV}\left(\frac{1}{\sigma}V_n(Z),\mathcal{N}(0,1)\right)&\leq d_{TV}\left(\frac{V_n(Z)}{\sqrt{\E V_n^2(Z)}},\mathcal{N}(0,1)\right)+
2\left|1-\frac{\sigma^2}{\E V_n^2(Z)}\right|.\label{dTV-V(Z)-sigma}
\end{align}
On the other hand, using \eqref{dTV-optimal berry esseen} and \eqref{max-k3-k4-estimator}, we obtain 
\begin{align}
 d_{TV}\left(\frac{V_n(Z)}{\sqrt{\E V_n^2(Z)}},\mathcal{N}(0,1)\right)&\leq C\max \left\{ |k_{3}(V_n(Z))|,\left|k_{4}(V_n(Z))\right| \right\}\nonumber\\
 &\leq C\left\{ \begin{array}{ll} \frac{1}{\sqrt{T_n}} & \text { if } \gamma>\frac23, \\
&\\
\frac{\log^2(T_n)}{\sqrt{T_n}}  & \text { if } \gamma=\frac23, \\
&\\
T_n^{\frac32-3 \gamma} & \text { if } \frac12<\gamma<\frac23. \end{array} \right.\label{dTV-V(Z)}
\end{align}
Therefore, combining \eqref{variance-V(Z)}, \eqref{dTV-V(X)-V(Z)}, \eqref{dTV-V(Z)-sigma} and \eqref{dTV-V(Z)}, the desired result \eqref{dTV-V(X)} is obtained.\\
Let us now prove \eqref{dKol-V(X)}.
 Using \eqref{X-Z}, we can write
\begin{align*}
R_n:=V_n(X)-V_n(Z)= \frac{\sqrt{T_n}}{n}\sum_{i=0}^{n-1}\left(e^{-2\theta t_{i}}Z_{0}^{2}-2e^{-\theta
t_{i}}Z_{0}Z_{t_{i}}\right).
\end{align*}
Combining this with \eqref{eq:product} and \eqref{error X-Z}, we can deduce  that, for every $n\geq1$,
\begin{align}R_n-\E R_n\in \mathcal{H}_2,\, \mbox{ and }
\left\| R_n-\E R_n\right\|_{L^2(\Omega)}\leq
\frac{C}{\sqrt{T_n}}.\label{ineq-R}
\end{align}
Next, we recall that, by straightforward calculations, 
we have,   for every  $\mu \in \R$ and $\sigma>0$ and for every integrable real-valued random variable $F$,
 \begin{align} d_{Kol}\left(\mu+F,\mathcal{N}(0,1)\right) \leq d_{Kol}\left( F,\mathcal{N}(0,1) \right)+\frac{1}{\sqrt{2\pi}}|\mu|.\label{ineq-mean-dKol}
 \end{align}
Using \eqref{ineq-R} and \eqref{ineq-mean-dKol}, and applying Lemma \ref{key-lemma1} for $M_n=V_n(X)$, $F_n=V_n(Z)$ and $G_n=R_n$, we obtain
\begin{align*}
d_{Kol}\left(\frac{1}{\sigma}V_n(X),\mathcal{N}(0,1)\right)&\leq d_{Kol}\left(\frac{1}{\sigma} {(V_n(Z)+R_n-\E R_n)},\mathcal{N}(0,1)\right) {+\frac{1}{\sqrt{2\pi}\sigma}|\E R_n|}\\
&\leq Cd_{Kol}\left(\frac{1}{\sigma}V_n(Z),\mathcal{N}(0,1)\right)  +C {\|R_n-\E R_n\|_{L^2(\Omega)}}+\frac{1}{\sqrt{2\pi}\sigma}|\E R_n|  \\
&\leq  \frac{C}{\sqrt{T_n}}+C\psi_n,
\end{align*}
where we used  \eqref{dTV-optimal berry esseen}, \eqref{dKol:dTV}  and \eqref{max-k3-k4-estimator}. Thus the proof of \eqref{dKol-V(X)} is complete.\\ 
For \eqref{dW-V(X)}, we have
\begin{align}
d_W\left(\frac{1}{\sigma}V_n(X),\mathcal{N}(0,1)\right) &\leq  {\frac{1}{\sigma}}\E \left| V_n(X)-V_n(Z)\right|+d_W\left(\frac{1}{\sigma}V_n(Z),\mathcal{N}(0,1)\right).\label{dW-V(X)-V(Z)}
\end{align}
Using  \eqref{dW-V(X)-V(Z)} and \eqref{dW-fourth-cumulant-thm} combined with similar arguments as above, we obtain
\begin{align*}
d_W\left(\frac{1}{\sigma}V_n(X),\mathcal{N}(0,1)\right)
&\leq C\left(\max\left(|k_{3}(V_n(Z))|,\left|k_{4}(V_n(Z))\right|\right)+\left|\E\left(V_n^2(Z)\right)-\sigma^2\right|
+\frac{1}{\sqrt{T_n}}\right).
\end{align*}
Combining this with  \eqref{variance-V(Z)} and \eqref{max-k3-k4-estimator}, we can deduce the estimate \eqref{dW-V(X)}.
\end{proof}

\subsection{\textbf{Berry-Esseen bounds   for an estimator  of $\mathbf{f\left(\E(Z_0^2)\right)}$}}

Here, we study rates of convergence of the central limit theorem   for the estimator  $f\left(v_n(X)\right)$ in terms of Kolmogorov  and Wasserstein metrics.

Let us first recall that,  by straightforward calculations, we have, for all $u,v\in \mathbb{R}$  and $uv\ge 0$,
\begin{eqnarray}\left|\mathbb{P}\left\{ \mathcal{N}(0,\sigma^2)  \leq
u\right\}-\mathbb{P}\left\{ \mathcal{N}(0,\sigma^2)  \leq
v\right\}\right|\leq\frac{|u-v|}{\sqrt{2\pi}\sigma}e^{-\frac{\min(u^2,v^2)}{2\sigma^2}}, \label{diff-proba-estimate}\end{eqnarray}
\begin{eqnarray}\sup_{\{x>0\}}|x|e^{-x^2}<\infty,\qquad
 \sup_{\{x>0\}}x^2e^{- x^2}<\infty,\label{sup-z-exp}
\end{eqnarray}
and for every $x>0$,
 \begin{align}\mathbb{P}\left\{\mathcal{N}(0,\sigma^2) \leq -x\right\} =\mathbb{P}\left\{\mathcal{N}(0,\sigma^2) \geq x\right\}&=\int_{x}^{\infty}\frac{1}{\sqrt{2\pi}\sigma} e^{-\frac{y^2}{2\sigma^2}}dy\nonumber\\
&\leq e^{-\frac{x^2}{4\sigma^2}}\int_{x}^{\infty} \frac{1}{\sqrt{2\pi}\sigma} e^{-\frac{y^2}{4\sigma^2}}dy \leq C
e^{-\frac{x^2}{4\sigma^2}}.\label{unif-expo}
 \end{align} 

The following useful lemma is needed for this section.
\begin{lemma}\label{key-lemma2}
		Denote $N\sim \mathcal{N}(0, 1)$. Fix $T>0$ and $\theta>0$. Let $F$ be a random variable such that $F>0$ almost surely and $\E F<\infty$, and let $f$ and $g$ be  two functions defined on $(0,\infty)$ such that  $f(F)>0$ almost surely, $g(x)=f^{-1}(x)$,   $g^{\prime}(x)<0$, $g^{\prime\prime}(x)>0$, $g^{\prime\prime\prime}(x)<0$.
		Then,		there exists a constant $C>0$ independent of $T$ and $F$ such that 
		\begin{align} 
			d_{Kol}\left(\sqrt{T}\left(f(F)-\theta\right),N\right) \leq  C  \left[d_{Kol}\left(\frac{\sqrt{T}}{|g^{\prime}(\theta)|}(F-\E F), N\right) +\sqrt{T}\left| \E F - g(\theta)\right|+\frac{1}{\sqrt{T}}\right].\label{kol-ratio}
		\end{align}
	\end{lemma}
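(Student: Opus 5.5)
The approach is to invert $f$ through the monotone function $g=f^{-1}$ and reduce every event about $f(F)$ to an event about $F$. Because $g'<0$, $f$ is decreasing, and for every $z\in\mathbb{R}$ with $\theta+z/\sqrt{T}$ in the range of $f$ we have
\begin{align*}
\mathbb{P}\left(\sqrt{T}(f(F)-\theta)\leq z\right)=\mathbb{P}\left(F\geq g\left(\theta+\tfrac{z}{\sqrt{T}}\right)\right)=\mathbb{P}\left(\tfrac{\sqrt{T}}{|g'(\theta)|}(F-\E F)\geq u_{z}\right),
\end{align*}
where $u_z:=\frac{\sqrt{T}}{|g'(\theta)|}\big(g(\theta+z/\sqrt{T})-\E F\big)$. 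Write $\delta:=d_{Kol}\big(\frac{\sqrt{T}}{|g'(\theta)|}(F-\E F),N\big)$. Up to an error of $\delta$, the probability above equals $\mathbb{P}(N\geq u_z)=\mathbb{P}(N\leq -u_z)$. The Kolmogorov distance is over closed versus open half-lines, and this causes no trouble because $N$ has a continuous law. The whole problem is then to bound $\sup_z|\mathbb{P}(N\leq -u_z)-\mathbb{P}(N\leq z)|$ by $C[\sqrt{T}|\E F-g(\theta)|+T^{-1/2}]$.

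To do this, split $-u_z-z$ into a bias part and a Taylor remainder:
\begin{align*}
-u_z-z=\frac{\sqrt{T}}{|g'(\theta)|}\big(\E F-g(\theta)\big)-\frac{\sqrt{T}}{|g'(\theta)|}\Big(g(\theta+\tfrac{z}{\sqrt{T}})-g(\theta)-g'(\theta)\tfrac{z}{\sqrt{T}}\Big),
\end{align*}
where we used $g'(\theta)=-|g'(\theta)|$. The bias part contributes at most $C\sqrt{T}|\E F-g(\theta)|$, since the normal distribution function is $(2\pi)^{-1/2}$-Lipschitz (this is \eqref{ineq-mean-dKol}). For the remainder, use Taylor's formula with $g''>0$ and $g'''<0$. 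These give $0\le g(\theta+h)-g(\theta)-g'(\theta)h\le \frac{g''(\theta)}{2}h^2$ for $h\geq 0$, and also on the part of $h<0$ lying in the domain, where the third-order term has the right sign. The remainder term is therefore bounded by $C z^2/\sqrt{T}$, and it has a definite sign, so $-u_z$ and $z$ lie on the same side whenever $z$ is not too large. Now apply \eqref{diff-proba-estimate} with $u=z$ and $v$ equal to $z$ minus the remainder, which is possible when $uv\geq0$. This yields a bound $C\frac{z^2}{\sqrt{T}}e^{-c z^2}$, which is at most $C/\sqrt{T}$ by \eqref{sup-z-exp}.

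The main obstacle is controlling large $|z|$, and $z$ near the boundary of the domain (where $\theta+z/\sqrt{T}\le 0$ or falls outside the range of $f$), uniformly in $z$. In those regions the quadratic Taylor bound fails or the sign condition $uv\ge0$ breaks down. To handle this I would fix a threshold, say $|z|\le \varepsilon\sqrt{T}$ with $\varepsilon<\theta$, and work in the moderate regime there: $g''$ is bounded on $[\theta-\varepsilon,\theta+\varepsilon]$, and by the monotonicity of $g'$ the remainder satisfies $|{\rm rem}|\le |z|/2$ after shrinking $\varepsilon$. Then $\min(u^2,v^2)\ge z^2/4$ and the Gaussian factor still kills $z^2$. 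For $|z|>\varepsilon\sqrt{T}$, both $\mathbb{P}(N\le z)$ and the probability for $\sqrt{T}(f(F)-\theta)$ are within $C e^{-c T}+\delta$ of $0$ or $1$. To see this, use \eqref{unif-expo} for the Gaussian tail. Then use monotonicity: the probability at such $z$ is dominated by its value at $z=\pm\varepsilon\sqrt{T}$, which has already been controlled by the moderate-regime argument up to $\delta+C\sqrt{T}|\E F-g(\theta)|+Ce^{-cT}$. Since $e^{-cT}\le C/\sqrt{T}$, collecting the three regimes gives \eqref{kol-ratio}.
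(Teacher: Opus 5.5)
Your argument is correct, and it reaches \eqref{kol-ratio} by a different route from the paper's.

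\textbf{What is shared.} The paper uses the same inversion $\{f(F)\le\theta+z/\sqrt{T}\}=\{F\ge g(\theta+z/\sqrt{T})\}$ and essentially your bias/remainder split (its $\mu$ and $\nu$).

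\textbf{How the paper proceeds.} It controls $|\mathbb{P}(N\geq\nu)-\phi(z)|$ for every $z>-\sqrt{T}\theta$ via the mean-value form $\nu=z\,g'(\theta+\eta z/\sqrt{T})/|g'(\theta)|$. It then splits into the ranges $z\le-\sqrt{T}\theta$, $(-\sqrt{T}\theta,-\sqrt{T}\theta/2)$, $(-\sqrt{T}\theta/2,0)$, $[0,\sqrt{T}\theta]$ and $(\sqrt{T}\theta,\infty)$. The global hypotheses enter as follows:
\begin{itemize}
\item $g''>0$ makes $g'$ increasing. This places the ratio $g'(\theta+\eta z/\sqrt{T})/g'(\theta)$ above $1$ for $z<0$, and in $[g'(2\theta)/g'(\theta),1]$ for $0\le z\le\sqrt{T}\theta$.
\item $g'''<0$ makes $g''$ decreasing, giving the uniform bound $g''\le g''(\theta/2)$ on $[\theta/2,\infty)$.
\item The outer ranges use $f(F)>0$ and $\nu<0$, together with Gaussian tail bounds.
\end{itemize}

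\textbf{How you proceed.} You work only in a window $|z|\le\varepsilon\sqrt{T}$. There, continuity of $g'$ and boundedness of $g''$ on $[\theta-\varepsilon,\theta+\varepsilon]$ give $|{\rm rem}|\le|z|/2$, so \eqref{diff-proba-estimate} and \eqref{sup-z-exp} apply. You then handle all larger $|z|$ at once by monotonicity of both distribution functions, transferring the estimate from $z=\pm\varepsilon\sqrt{T}$.

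\textbf{What each buys.} Your route needs much less: $g$ decreasing and $C^2$ near $\theta$ with $g'(\theta)\neq0$. The global sign conditions on $g''$ and $g'''$ are not used, and no region near the boundary of the domain needs separate treatment. The paper's route in exchange gives constants explicit in $g'(\theta)$, $g'(2\theta)$, $g''(\theta/2)$ and $g(\theta)-g(2\theta)$. Your constants depend on an $\varepsilon$ chosen from the modulus of continuity of $g'$ at $\theta$.

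\textbf{One slip to correct.} For $h<0$ the third-order Taylor term does not have the favourable sign. Since $g'''<0$ and $h^3<0$, that term is positive, so $g(\theta+h)-g(\theta)-g'(\theta)h\geq\frac{g''(\theta)}{2}h^2$ there, the reverse of what you wrote. This is harmless. In the moderate regime you actually use $\frac12\sup_{[\theta-\varepsilon,\theta+\varepsilon]}g''\,h^2$, which is valid for both signs of $h$, so simply drop the claim about $h<0$.
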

\begin{proof} The proof of this estimate is inspired by computations developed in the proof of \cite[Theorem 3.2]{SV}. 
Define 
\[A(z):=\mathbb{P}\left\{\sqrt{T}\left(f(F)-\theta\right)\leq z\right\}-\phi(z), \quad \mbox{with } \phi(z):=\mathbb{P}\left\{N\leq z\right\}.\]
Suppose $z\leq-\sqrt{T}\theta$, so, $\theta+\frac{z}{\sqrt{T}}\leq 0$. Since $f(F)>0$, we have
\begin{align*} \mathbb{P}\left\{\sqrt{T}\left(f(F)-\theta\right)\leq z\right\}= \mathbb{P} \left\{f(F) \leq \theta+\frac{z}{\sqrt{T}}\right\}=0.
\end{align*} 
Thus, in view of \eqref{unif-expo},
\begin{align*}\left|A(z)\right|= \phi(z) \leq  \phi(-\sqrt{T}\theta)\leq  C e^{-\frac{T\theta^2}{4}}\leq \frac{C}{\sqrt{T}}.
\end{align*}
Now suppose $z>-\sqrt{T}\theta$, so, $\theta+\frac{z}{\sqrt{T}}> 0$. Using $g$ is decreasing, we have
\begin{align*} \mathbb{P}\left\{\sqrt{T}\left(f(F)-\theta\right)\leq z\right\}&= \mathbb{P} \left\{f(F) \leq \theta+\frac{z}{\sqrt{T}}\right\}
\\&=\mathbb{P} \left\{\frac{\sqrt{T}}{|g^{\prime}(\theta)|}\left(F-g(\theta)\right) \geq \frac{\sqrt{T}}{|g^{\prime}(\theta)|}\left(g\left(\theta+\frac{z}{\sqrt{T}}\right)-g(\theta)\right)\right\}.
\end{align*}
Denote
\[\nu:=\frac{\sqrt{T}}{|g^{\prime}(\theta)|}\left(g\left(\theta+\frac{z}{\sqrt{T}}\right)-g(\theta)\right),\qquad 
\mu:=\nu-\frac{\sqrt{T}}{|g^{\prime}(\theta)|}\left(\E F-g(\theta)\right).\]
Thus, by \eqref{diff-proba-estimate}, we can write, 
\begin{align*}\left|A(z)\right|&= \left| \mathbb{P} \left\{\frac{\sqrt{T}}{|g^{\prime}(\theta)|}\left(F-\E F\right) \geq\mu\right\}-\phi(z)\right|
\\&\leq \left| \mathbb{P} \left\{\frac{\sqrt{T}}{|g^{\prime}(\theta)|}\left(F-\E F\right) \geq\mu\right\}-\mathbb{P} \left\{N\geq \mu\right\}\right|\\
&\quad +\left|\mathbb{P} \left\{N\geq \mu\right\}-\mathbb{P}\left\{N\geq \nu\right\}\right|+\left|\mathbb{P} \left\{N\geq \nu\right\}-\phi(z)\right|
\\&\leq d_{Kol}\left(\frac{\sqrt{T}}{|g^{\prime}(\theta)|}(F-\E F), N\right)
+ \frac{\sqrt{T}}{|g^{\prime}(\theta)|}\left|\E F-g(\theta)\right|+\left|\mathbb{P} \left\{N\geq \nu\right\}-\phi(z)\right|.
\end{align*}
Therefore, to achieve \eqref{kol-ratio}, it remains to prove that, for every $z>-\sqrt{T}\theta$,
\begin{align}\left|\mathbb{P} \left\{N\geq \nu\right\}-\phi(z)\right|\leq \frac{C}{\sqrt{T}}.\label{phi-nu}
\end{align}
Using the Mean Value Theorem and $|g^{\prime}(\theta)|=-g^{\prime}(\theta)$, we obtain that, for each $z$, there exists $\eta\in(0,1)$ such that 
\begin{align*}\nu=\frac{z}{|g^{\prime}(\theta)|} g^{\prime}\left(\theta+\frac{\eta z}{\sqrt{T}}\right)=\frac{-z}{g^{\prime}(\theta)} g^{\prime}\left(\theta+\frac{\eta z}{\sqrt{T}}\right).\end{align*}
This implies, for every $z$,
\begin{align}\left|\mathbb{P} \left\{N\geq \nu\right\}-\phi(z)\right|=\left|\phi\left(\frac{z}{g^{\prime}(\theta)} g^{\prime}\left(\theta+\frac{\eta z}{\sqrt{T}}\right)\right)-\phi(z)\right|.\label{MVT-eta}
\end{align}
Furthermore, by the Mean Value Theorem, for each $z$, there exists $\delta\in(0,1)$ such that 
\begin{align}\frac{g^{\prime}\left(\theta+\frac{\eta z}{\sqrt{T}}\right)}{g^{\prime}(\theta)}-1& =  \frac{\eta z}{\sqrt{T}g^{\prime}(\theta)}
g^{\prime\prime}\left(\theta+\frac{\eta\delta z}{\sqrt{T}}\right).\label{MVT-delta.}
\end{align}
If $-\sqrt{T}\theta<z<-\frac{\sqrt{T}\theta}{2}$, then,  $0<\theta+\frac{\eta z}{\sqrt{T}}<\theta$, since $-{\blue \theta}<\frac{z}{\sqrt{T}}<\frac{\eta z}{\sqrt{T}}$. So, $\frac{g^{\prime}\left(\theta+\frac{\eta z}{\sqrt{T}}\right)}{g^{\prime}(\theta)}>1$. Combining this with \eqref{unif-expo} and \eqref{MVT-eta} yields
 \begin{align}\left|\mathbb{P} \left\{N\geq \nu\right\}-\phi(z)\right|&= \phi(z)-\phi\left(z\frac{g^{\prime}\left(\theta+\frac{\eta z}{\sqrt{T}}\right)}{g^{\prime}(\theta)} \right) \nonumber \\
& \leq \phi(z)\nonumber\\
& \leq  \phi\left(-\frac{\sqrt{T}\theta}{2}\right)\leq  C e^{-\frac{T\theta^2}{16}}\leq \frac{C}{\sqrt{T}}.\label{nu-ineq1}
\end{align}
If $-\frac{\sqrt{T}\theta}{2}<z<0$, then, $0<\theta+\frac{\eta z}{\sqrt{T}}<\theta$, since $-{\blue \theta}<\frac{z}{\sqrt{T}}<\frac{\eta z}{\sqrt{T}}$. In this case,
using \eqref{diff-proba-estimate},  \eqref{MVT-eta} and $\frac{g^{\prime}\left(\theta+\frac{\eta z}{\sqrt{T}}\right)}{g^{\prime}(\theta)}>1$, we obtain
\begin{align*}\left|\mathbb{P} \left\{N\geq \nu\right\}-\phi(z)\right|&=\left|\phi\left(\frac{z}{g^{\prime}(\theta)} g^{\prime}\left(\theta+\frac{\eta z}{\sqrt{T}}\right)\right)-\phi(z)\right|\\
& \leq \frac{|z|}{\sqrt{2\pi}}\left(\frac{g^{\prime}\left(\theta+\frac{\eta z}{\sqrt{T}}\right)}{g^{\prime}(\theta)}-1\right)e^{-\frac{z^2}{2}}.
\end{align*}
Moreover, in view of \eqref{MVT-delta.}, \eqref{sup-z-exp}, $\theta+\frac{\eta\delta z}{\sqrt{T}}\geq \frac{\theta}{2}$ and $g^{\prime\prime}$ is decreasing,
\begin{align}\left|\mathbb{P} \left\{N\geq \nu\right\}-\phi(z)\right| & \leq \frac{|z|e^{-\frac{z^2}{2}}}{\sqrt{2\pi}}\frac{\eta z}{\sqrt{T}g^{\prime}(\theta)}
g^{\prime\prime}\left(\theta+\frac{\eta\delta z}{\sqrt{T}}\right)\nonumber\\
& = \frac{z^2e^{-\frac{z^2}{2}}}{\sqrt{2\pi}}\frac{\eta}{\sqrt{T}|g^{\prime}(\theta)|}
g^{\prime\prime}\left(\theta+\frac{\eta\delta z}{\sqrt{T}}\right)\nonumber\\
& \leq \frac{z^2e^{-\frac{z^2}{2}}}{\sqrt{2\pi}}\frac{\eta}{\sqrt{T}|g^{\prime}(\theta)|}
g^{\prime\prime}\left(\frac{\theta}{2}\right)\nonumber\\
& \leq \frac{C}{\sqrt{T}}.\label{nu-ineq2}
\end{align}
If $0\leq z \leq \sqrt{T}\theta$, then, $\theta\leq \theta+\frac{\eta z}{\sqrt{T}}\leq 2\theta$, so, 
$  {\blue 0<}\frac{g^{\prime}\left(2\theta\right)}{g^{\prime}(\theta)}\leq \frac{g^{\prime}\left(\theta+\frac{\eta z}{\sqrt{T}}\right)}{g^{\prime}(\theta)}\leq1$. In this situation,
using \eqref{diff-proba-estimate} and  \eqref{MVT-eta}, we get
\begin{align*}\left|\mathbb{P} \left\{N\geq \nu\right\}-\phi(z)\right|&=\left|\phi\left(z\frac{g^{\prime}\left(\theta+\frac{\eta z}{\sqrt{T}}\right)}{g^{\prime}(\theta)}\right)-\phi(z)\right|\\
& \leq \frac{|z|}{\sqrt{2\pi}}{\left |\frac{g^{\prime}\left(\theta+\frac{\eta z}{\sqrt{T}}\right)}{g^{\prime}(\theta)}-1\right|}e^{-\frac{z^2}{2}\left(\frac{g^{\prime}\left(\theta+\frac{\eta z}{\sqrt{T}}\right)}{g^{\prime}(\theta)}\right)^2}
\\
& \leq \frac{|z|}{\sqrt{2\pi}}{ \left|\frac{g^{\prime}\left(\theta+\frac{\eta z}{\sqrt{T}}\right)}{g^{\prime}(\theta)}-1\right|}e^{-\frac{z^2}{2}\left(\frac{g^{\prime}\left(2\theta\right)}{g^{\prime}(\theta)}\right)^2}.
\end{align*}
Combining  this with \eqref{MVT-delta.}, \eqref{sup-z-exp},  $\theta\leq \theta+\frac{\eta z}{\sqrt{T}}\leq 2\theta$ and $g^{\prime\prime}$ is decreasing,
\begin{align}\left|\mathbb{P} \left\{N\geq \nu\right\}-\phi(z)\right| & \leq \frac{|z|e^{-\frac{z^2}{2}\left(\frac{g^{\prime}\left(2\theta\right)}{g^{\prime}(\theta)}\right)^2}}{\sqrt{2\pi}}\frac{\eta z}{\sqrt{T}{|g^{\prime}(\theta)|}}
g^{\prime\prime}\left(\theta+\frac{\eta\delta z}{\sqrt{T}}\right)\nonumber\\
& = \frac{z^2e^{-\frac{z^2}{2}\left(\frac{g^{\prime}\left(2\theta\right)}{g^{\prime}(\theta)}\right)^2}}{\sqrt{2\pi}}\frac{\eta}{\sqrt{T}|g^{\prime}(\theta)|}
g^{\prime\prime}\left(\theta+\frac{\eta\delta z}{\sqrt{T}}\right)\nonumber\\
& \leq \frac{z^2e^{-\frac{z^2}{2}\left(\frac{g^{\prime}\left(2\theta\right)}{g^{\prime}(\theta)}\right)^2}}{\sqrt{2\pi}}\frac{\eta}{\sqrt{T}|g^{\prime}(\theta)|}
g^{\prime\prime}\left(2\theta\right)\nonumber\\
& \leq \frac{C}{\sqrt{T}}.\label{nu-ineq3}
\end{align}
If $z>\sqrt{T}\theta$, then,  $ \theta+\frac{ z}{\sqrt{T}}>2\theta>\theta$, and $-z<-\sqrt{T}\theta$. Moreover, since $g$ is  decreasing,
\[\nu=\frac{\sqrt{T}}{|g^{\prime}(\theta)|}\left(g\left(\theta+\frac{z}{\sqrt{T}}\right)-g(\theta)\right)\leq
-\frac{\sqrt{T}}{|g^{\prime}(\theta)|}\left(g(\theta)-g\left(2\theta\right)\right)<0.\]
Combining these facts with \eqref{unif-expo}, we obtain
 \begin{align}\left|\mathbb{P} \left\{N\geq \nu\right\}- \phi(z)\right|&= \left|\phi(\nu)-\phi(-z)\right| \nonumber\\
& \leq\phi(\nu)+ \phi(-z)\nonumber\\
& \leq  \phi\left(-\frac{\sqrt{T}}{|g^{\prime}(\theta)|}\left(g(\theta)-g\left(2\theta\right)\right)\right) +\phi\left(-\sqrt{T}\theta\right)\nonumber
 \\&\leq \frac{C}{\sqrt{T}}.\label{nu-ineq4}
\end{align} 
As a consequence, combining \eqref{nu-ineq1}$-$\eqref{nu-ineq4}, we get \eqref{phi-nu}. Thus the proof of Lemma \ref{key-lemma2} is complete.
\end{proof}
\begin{remark}The case when the function $f$ in Lemma \ref{key-lemma2} is of the form $f(x)=Cx^{-\frac{1}{\alpha}}$ with $0<\alpha<2$, was proved in \cite{CZ}
\end{remark}

\begin{theorem}\label{dKol-rate-CLT-Vn}
Assume that the assumption  $\mathbf{\left(\mathcal{A}\right)}$ holds. Let $f$ and $g$ be  two functions defined on $(0,\infty)$ such that, for every $n\geq1$,  $f(v_n(X))>0$ almost surely, $g(x)=f^{-1}(x)$,   $g^{\prime}(x)<0$, $g^{\prime\prime}(x)>0$, $g^{\prime\prime\prime}(x)<0$. Then, there exists $C>0$ depending only on $\theta$ and $\rho(0)$ such that for all
$n\geq1$,
\begin{eqnarray}
 d_{Kol}\left(\frac{\sqrt{T_n}\left(f(v_n(X))-f\left(\E(Z_0^2)\right)\right)}{\sigma|f^{\prime}\left(\E(Z_0^2)\right)|},\mathcal{N}(0,1)\right)\nonumber &\leq& \frac{C}{\sqrt{T_n}}+C\psi_n,\label{dKol-inverse-V(X)}
\end{eqnarray}
where $\psi_n$ is given in \eqref{defi-psi}.
\end{theorem}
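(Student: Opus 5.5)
The plan is to apply Lemma \ref{key-lemma2} with $T=T_n$, $F=v_n(X)$, and with the parameter there equal to $\vartheta:=f(\E(Z_0^2))=f(\rho(0))$, so that $g(\vartheta)=\rho(0)$. Two preliminary adjustments are needed. First, since $f=g^{-1}$, we have $|f'(\rho(0))|=1/|g'(\vartheta)|$. Hence the statistic in the theorem equals $\frac{|g'(\vartheta)|}{\sigma}\sqrt{T_n}(f(v_n(X))-\vartheta)$. Second, the lemma compares with a standard normal with no scale factor. I would therefore apply it to the rescaled function $\tilde f:=\frac{|g'(\vartheta)|}{\sigma}f$, whose inverse $\tilde g(x)=g(\sigma x/|g'(\vartheta)|)$ keeps the sign conditions $\tilde g'<0$, $\tilde g''>0$, $\tilde g'''<0$. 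The corresponding parameter is $\tilde\vartheta=|g'(\vartheta)|\vartheta/\sigma$. Then $|\tilde g'(\tilde\vartheta)|=\sigma|g'(\vartheta)|/|g'(\vartheta)|=\sigma$, and $\tilde g(\tilde\vartheta)=\rho(0)$. The hypotheses $F>0$ and $\E F<\infty$ are clear, and $\tilde f(F)>0$ a.s.\ is given. The lemma then yields
\[
d_{Kol}\Big(\text{statistic},\mathcal N(0,1)\Big)\le C\Big[d_{Kol}\Big(\tfrac{1}{\sigma}\sqrt{T_n}(v_n(X)-\E v_n(X)),\mathcal N(0,1)\Big)+\sqrt{T_n}\,|\E v_n(X)-\rho(0)|+\tfrac{1}{\sqrt{T_n}}\Big].
\]
The constant depends on $\theta$, $\rho(0)$ and $g$ only through fixed values such as $g''(\vartheta/2)$ and $g'(2\vartheta)$.

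Next, the bias term. Since $\E V_n(Z)=0$, we get $\sqrt{T_n}(\E v_n(X)-\rho(0))=\E V_n(X)=\E R_n$ with $R_n=V_n(X)-V_n(Z)$. By \eqref{error X-Z}, $|\E R_n|\le \|R_n\|_{L^1(\Omega)}\le C/\sqrt{T_n}$.

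For the first term, note that $\sqrt{T_n}(v_n(X)-\E v_n(X))=V_n(Z)+(R_n-\E R_n)$, which is exactly the centered quantity handled in the proof of \eqref{dKol-V(X)}. There, Lemma \ref{key-lemma1} is applied with $F_n=V_n(Z)$ and $G_n=R_n-\E R_n\in\mathcal H_2$, using $\|G_n\|_{L^2}\le C/\sqrt{T_n}$ by \eqref{ineq-R}. This bounds the term by $Cd_{Kol}(\frac{1}{\sigma}V_n(Z),\mathcal N(0,1))+C/\sqrt{T_n}$. The remaining distance is then controlled via \eqref{dKol:dTV}, \eqref{dTV-V(Z)-sigma}, \eqref{dTV-V(Z)}, \eqref{max-k3-k4-estimator} and \eqref{variance-V(Z)}. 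These give $C/\sqrt{T_n}+C\psi_n$, since the cumulant rates $T_n^{3/2-3\gamma}$ and $\log^2(T_n)/\sqrt{T_n}$ are dominated by $T_n^{1-2\gamma}$ or $1/\sqrt{T_n}$ in the respective ranges of $\gamma$. Alternatively, this step can be cited directly as the argument already giving \eqref{dKol-V(X)}, with the mean removed.

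Summing the three contributions gives the bound $C/\sqrt{T_n}+C\psi_n$ for large $n$. For the finitely many small $n$ the bound is trivial after enlarging $C$, because $d_{Kol}\le 1$ and $\psi_n$ is bounded below on any finite set. The main obstacle I expect is the bookkeeping in the rescaling step. I must check that the rescaled $\tilde g$ still satisfies the hypotheses of Lemma \ref{key-lemma2}, and that the normalisation $|\tilde g'(\tilde\vartheta)|=\sigma$ exactly matches the $\sigma|f'(\rho(0))|$ in the statement. The rest is assembling bounds already established.
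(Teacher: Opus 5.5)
Your proposal is correct and follows the paper's proof. You apply Lemma \ref{key-lemma2} with $F=v_n(X)$, bound the bias $\sqrt{T_n}\,|\E v_n(X)-\rho(0)|=|\E R_n|$ by \eqref{error X-Z}, and control the centered Kolmogorov term through the argument behind \eqref{dKol-V(X)}; you additionally make explicit the rescaling $\tilde f=\frac{|g'(\vartheta)|}{\sigma}f$ needed to match the standard normal in Lemma \ref{key-lemma2} (which the paper leaves implicit), and you treat $V_n(Z)+(R_n-\E R_n)$ directly rather than passing through $d_{Kol}(\frac{1}{\sigma}V_n(X),\mathcal{N}(0,1))$ via \eqref{ineq-mean-dKol}.
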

\begin{proof}Using Lemma \ref{key-lemma2} and the fact that $g^{\prime}\left[f\left(\E(Z_0^2)\right)\right]={1}/{f^{\prime}\left(\E(Z_0^2)\right)}$, we can write
\begin{align} 
&d_{Kol}\left(\frac{\sqrt{T_n}}{\sigma|f^{\prime}\left(\E(Z_0^2)\right)|}\left(f(v_n(X))-f\left(\E(Z_0^2)\right)\right),\mathcal{N}(0,1)\right)\nonumber \\&\leq
			  C  \left[d_{Kol}\left(\frac{\sqrt{T_n}}{\sigma}\left(v_n(X)-\E v_n(X)\right), \mathcal{N}(0,1)\right) +\sqrt{T}\left| \E v_n(X) - \E(Z_0^2)\right|+\frac{1}{\sqrt{T_n}}\right]
\nonumber \\&\leq
			  C  \left[d_{Kol}\left(\frac{1}{\sigma}V_n(X), \mathcal{N}(0,1)\right) +\sqrt{T}\left| \E v_n(X) - \E(Z_0^2)\right|+\frac{1}{\sqrt{T_n}}\right],\label{dKol-estimate1} 
		\end{align}
where the latter inequality comes from \eqref{ineq-mean-dKol}.\\
Next, combining \eqref{error X-Z}, \eqref{dKol-V(X)} and \eqref{dKol-estimate1}, the estimate  \eqref{dKol-inverse-V(X)} is obtained.
\end{proof}

\begin{theorem}\label{dW-rate-CLT-Vn}
Assume that the assumption  $\mathbf{\left(\mathcal{A}\right)}$ holds. Let $f : \R^+ \to \R^+$ be an invertible function that is a diffeomorphism, and is twice continuously differentiable. Then,   there exists $C>0$ depending only on $\theta$ and $\rho(0)$   such that for large
$n\geq1$,
\begin{eqnarray}
d_{W}\left(\frac{\sqrt{T_n}\left(f(v_n(X))-f\left(\E(Z_0^2)\right)\right)}{\sigma|f^{\prime}\left(\E(Z_0^2)\right)|},\mathcal{N}(0,1)\right)  \leq C\left[\frac{\|f^{\prime\prime}(\zeta_{n})\|_{L^2(\Omega)}}{\sqrt{T_n}} +\frac{1}{\sqrt{T_n}}+\psi_n\right],\label{dW-inverse-V(X)}
\end{eqnarray}
where $\psi_n$ is given in \eqref{defi-psi}, and $\zeta_n$ is a random variable between $v_n(X)$ and  $\E(Z_0^2)$.
\end{theorem}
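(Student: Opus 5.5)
We plan a second-order Taylor expansion (delta method) of $f$ around $\rho(0)=\E(Z_0^2)$. This reduces the claim to the Wasserstein bound \eqref{dW-V(X)} of Theorem \ref{rate-CLT-Vn} plus a remainder term.

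By Taylor's theorem with Lagrange remainder there is a random $\zeta_n$ between $v_n(X)$ and $\rho(0)$ such that
\begin{align*}
f(v_n(X))-f(\rho(0))=f'(\rho(0))\left(v_n(X)-\rho(0)\right)+\frac12 f''(\zeta_n)\left(v_n(X)-\rho(0)\right)^2 .
\end{align*}
Multiplying by $\sqrt{T_n}/(\sigma|f'(\rho(0))|)$ and recalling $V_n(X)=\sqrt{T_n}(v_n(X)-\rho(0))$, we get
\begin{align*}
\frac{\sqrt{T_n}\left(f(v_n(X))-f(\rho(0))\right)}{\sigma|f'(\rho(0))|}=\pm\frac{1}{\sigma}V_n(X)+\frac{f''(\zeta_n)}{2\sigma|f'(\rho(0))|\sqrt{T_n}}V_n^2(X).
\end{align*}
The sign $\pm$ is that of $f'(\rho(0))$, which is nonzero since $f$ is a diffeomorphism. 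The sign is harmless because $\mathcal{N}(0,1)$ is symmetric and $d_W(-Y,N)=d_W(Y,N)$.

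Since $d_W(Y+R,N)\le d_W(Y,N)+\E|R|$ for Lipschitz test functions, we obtain
\begin{align*}
d_W(\cdots)\le d_W\left(\frac{1}{\sigma}V_n(X),\mathcal{N}(0,1)\right)+\frac{C}{\sqrt{T_n}}\E\left|f''(\zeta_n)V_n^2(X)\right| .
\end{align*}
The first term is at most $C/\sqrt{T_n}+C\psi_n$ by \eqref{dW-V(X)}. For the second, Cauchy--Schwarz gives $\E|f''(\zeta_n)V_n^2(X)|\le\|f''(\zeta_n)\|_{L^2(\Omega)}\|V_n(X)\|_{L^4(\Omega)}^2$. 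It then remains to show $\sup_n\|V_n(X)\|_{L^4(\Omega)}<\infty$ for large $n$.

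For that uniform moment bound, write $V_n(X)=V_n(Z)+R_n$. Since $V_n(Z)=I_2(\varepsilon_n)$ lies in the second chaos, hypercontractivity \eqref{hypercontractivity} gives $\|V_n(Z)\|_{L^4}\le C(\E V_n^2(Z))^{1/2}$. By \eqref{variance-V(Z)}, $\E V_n^2(Z)\to\sigma^2$, so this is bounded for large $n$. Also $\|R_n\|_{L^4}\le C/\sqrt{T_n}$ by \eqref{error X-Z} with $p=4$. Collecting the three pieces yields \eqref{dW-inverse-V(X)}.

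The only delicate point is measurability of $\zeta_n$ and the fact that $f''(\zeta_n)$ need not be bounded; this is why the bound keeps $\|f''(\zeta_n)\|_{L^2(\Omega)}$ explicit. We will choose $\zeta_n$ measurably, for instance via the integral form of the remainder, and simply carry this norm through. The case $\|f''(\zeta_n)\|_{L^2(\Omega)}=\infty$ makes the statement trivial.
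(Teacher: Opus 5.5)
Your proposal is correct and follows essentially the same route as the paper: a second-order Taylor expansion, the bound $d_W(Y+R,N)\le d_W(Y,N)+\E|R|$ combined with \eqref{dW-V(X)}, and Cauchy--Schwarz plus hypercontractivity to control the remainder by $C\|f''(\zeta_n)\|_{L^2(\Omega)}/\sqrt{T_n}$. The differences are cosmetic. You make the sign of $f'(\E(Z_0^2))$ explicit, and you get the fourth-moment bound by applying hypercontractivity to $V_n(Z)\in\mathcal{H}_2$ and \eqref{error X-Z} to $R_n$, whereas the paper applies hypercontractivity directly to $v_n(X)-\E(Z_0^2)$.
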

\begin{proof} We have, for every $n\geq1$,
\begin{align*}
 f(v_n(X))-f\left(\E(Z_0^2)\right) =f^{\prime}\left(\E(Z_0^2)\right)\left(v_n(X) - \E(Z_0^2)\right)+\frac{1}{2} f^{\prime\prime}(\zeta_{n})\left(v_n(X) - \E(Z_0^2)\right)^2
\end{align*}
for some random point $\zeta_{n}$ between $v_n(X)$ and  $\E(Z_0^2)$.\\
Hence
\begin{align}
&d_{W}\left(\frac{\sqrt{T_n}}{\sigma|f^{\prime}\left(\E(Z_0^2)\right)|}\left(f(v_n(X))-f\left(\E(Z_0^2)\right)\right),\mathcal{N}(0,1)\right)\nonumber\\
&\leq
\frac{1}{2} {\frac{\sqrt{T_n}}{\sigma|f^{\prime}\left(\E(Z_0^2)\right)|}}\E \left| f^{\prime\prime}(\zeta_{n})\left(v_n(X) - \E(Z_0^2)\right)^2  \right|
  + d_{W}\left(\frac{1}{\sigma} V_n(X),\mathcal{N}(0,1)\right).\label{estimate1-dW}
\end{align}
The last term in this  inequality is bounded in \eqref{dW-V(X)}. Combining H\"older's inequality,  the hypercontractivity property \eqref{hypercontractivity}, \eqref{error X-Z} and \eqref{variance-V(Z)}, we obtain
\begin{eqnarray}
\E \left| f^{\prime\prime}(\zeta_{n})\left(v_n(X) - \E(Z_0^2)\right)^2  \right| &\leq  &  \left( \E |f^{\prime\prime}(\zeta_{n})|^2\right)^{1/2} \left( \E \left|v_n(X) - \E(Z_0^2) \right|^{4}  \right)^{1/2} \nonumber \\
&\leq & \quad C\left( \E |f^{\prime\prime}(\zeta_{n})|^2\right)^{1/2} \E \left|v_n(X) - \E(Z_0^2) \right|^2\nonumber 
\\ &\leq &
\frac{C\|f^{\prime\prime}(\zeta_{n})\|_{L^2(\Omega)}}{T_n}  \E \left[V_n(X)^2\right]\nonumber 
\\ &\leq &
\frac{C\|f^{\prime\prime}(\zeta_{n})\|_{L^2(\Omega)}}{T_n} \label{estimate2-dW}.
\end{eqnarray}
Therefore, in view of \eqref{dW-V(X)} and \eqref{estimate1-dW} and \eqref{estimate2-dW}, the bound~\eqref{dW-inverse-V(X)} follows.

\end{proof}

 \section{\textbf{Applications to Gaussian Ornstein-Uhlenbeck processes }}\label{applications}
In this section we  apply our general method from Section \ref{sect-main} to specific examples,
we consider the fractional Ornstein-Uhlenbeck  processes of the first kind and of the second kind.

\subsection{Fractional Ornstein-Uhlenbeck process of the first kind}\label{FOUsect}

Here we consider the fractional Ornstein-Uhlenbeck process of the first kind $X^{\theta} := \left\{X^{\theta}_{t},t\geq 0\right\} $, defined as  
  solution of the following linear stochastic differential equation
\begin{equation}
X^{\theta}_{0}=0;\quad dX^{\theta}_{t}=-\theta X^{\theta}_{t}dt+dB_{t}^{H},\quad t\geq 0, \label{FOU}
\end{equation}
where $\theta >0$ is an unknown parameter   which we would like to estimate, and  $\left\{B_{t}^{H},t\geq 0\right\} $ is a fractional Brownian motion of Hurst index $H\in (0,1)$.
It is well known that the equation~\eqref{FOU} has an explicit solution:
\begin{align}
X^{\theta}_{t}=\int_{0}^{t}e^{-\theta (t-s)}dB_{s}^{H}. \label{fOUX}
\end{align}
Moreover,
\begin{equation}
Z_{t}^{\theta }=\int_{-\infty }^{t}e^{-\theta (t-s)}dB_{s}^{H}.\label{Ztheta}
\end{equation}
 is a stationary Gaussian process, see \cite{CKM,EV}.\\
 The following theorem   provides a Berry-Esseen bound 
in Kolmogorov and   Wasserstein distances  for the   estimator $f_H\left(v_n\left(X^{\theta}\right)\right)$ of $\theta$, where 
\begin{equation}v_n\left(X^{\theta}\right) = \frac{1}{n} \sum_{i =0}^{n-1}\left[X_{t_{i}}^{\theta}\right]^{2},\, \mbox{and } f_H(x)=\left(\frac{H\Gamma(2H)}{x}\right)^{\frac{1}{2H}},\, x>0.\label{defi-fH}
\end{equation}
 \begin{theorem} Let $H\in\left(0,\frac34\right)$. Denote $\sigma_H^2:= 4 \int_{\R}\rho_{Z^{\theta}}^2(r)dr$ with $\rho_{Z^{\theta}}(r)=\E\left[Z_r^{\theta}Z_0^{\theta}\right],\, r~>~0$. Then there  exists $C>0$ depending only on $\theta$ and $H$   such that for large
$n\geq1$,
\begin{align}
 d_{Kol}\left(\frac{2H^2\Gamma(2H)\sqrt{T_n}\left(f_H\left(v_n\left(X^{\theta}\right)\right)-\theta\right)}{\sigma_H\theta^{2H+1}},\mathcal{N}(0,1)\right)
 &\leq C  \left\{ \begin{array}{ll}
\Delta_n+\frac{1}{\sqrt{n\Delta_n}} & \mbox{ if }0<H \leq \frac58, \\ ~~ &  \\
\Delta_n+\frac{1}{(n\Delta_n)^{3-4H}} & \mbox{ if }\frac58<H<\frac34,
\end{array} \right.  \label{dKol-FOU}
\end{align}
and 
\begin{align}
 d_{W}\left(\frac{2H^2\Gamma(2H)\sqrt{T_n}\left(f_H\left(v_n\left(X^{\theta}\right)\right)-\theta\right)}{\sigma_H\theta^{2H+1}},\mathcal{N}(0,1)\right)
 &\leq C  \left\{ \begin{array}{ll}
\Delta_n+\frac{1}{\sqrt{n\Delta_n}} & \mbox{ if }0<H \leq \frac58, \\ ~~ &  \\
\Delta_n+\frac{1}{(n\Delta_n)^{3-4H}} & \mbox{ if }\frac58<H<\frac34,
\end{array} \right. \label{dW-FOU}
\end{align}
 where $v_n\left(X^{\theta}\right)$ and $f_H$ are given in \eqref{defi-fH}.
  \end{theorem}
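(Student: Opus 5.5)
The plan is to obtain the result as a direct specialization of Theorem \ref{dKol-rate-CLT-Vn} (for the Kolmogorov bound) and Theorem \ref{dW-rate-CLT-Vn} (for the Wasserstein bound), with $Z=Z^\theta$ given by \eqref{Ztheta} and $X=X^\theta$. Since $B^H_0=0$, we have $X^\theta_t=Z^\theta_t-e^{-\theta t}Z^\theta_0$, so \eqref{X-Z} holds. Moreover $\rho(0)=\E[(Z^\theta_0)^2]=H\Gamma(2H)\theta^{-2H}$. Hence $f_H(\rho(0))=\theta$, and the estimator $f_H(v_n(X^\theta))$ is exactly $f(v_n(X))$ with $f=f_H$.

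\textbf{Step 1: verify Assumption $(\mathcal{A})$ and identify $\gamma$.} We use the known asymptotics of the stationary fOU covariance (see \cite{CKM}): $\rho_{Z^\theta}(t)\sim H(2H-1)\theta^{-2}t^{2H-2}$ as $t\to\infty$ for $H\neq\frac12$, and $\rho_{Z^\theta}$ decays exponentially when $H=\frac12$. The derivative $\rho'$ behaves like $t^{2H-3}$. Continuity of $\rho$ and $\rho'$ on $[0,\infty)$ follows from the integral representation of $\rho$, with care needed at $t=0$ for small $H$, where only $|\rho'|$ needs to be continuous or bounded near $0$. Thus \eqref{behavior-rho} holds with $\gamma=2-2H$, and $\gamma>\frac12$ is equivalent to $H<\frac34$, which also gives $\sigma_H^2<\infty$. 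For $H=\frac12$ any $\gamma$ works.

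\textbf{Step 2: compute $\psi_n$.} The condition $\gamma\ge\frac34$ means $H\le\frac58$, which gives $\psi_n=\Delta_n+T_n^{-1/2}$. For $\frac58<H<\frac34$ we have $T_n^{1-2\gamma}=T_n^{4H-3}$, and this rate dominates $T_n^{-1/2}$. This reproduces the right-hand sides of \eqref{dKol-FOU} and \eqref{dW-FOU}.

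\textbf{Step 3: normalization.} We have $f_H'(x)=-\frac{1}{2H}f_H(x)/x$. Hence $|f_H'(\rho(0))|=\frac{\theta}{2H\rho(0)}=\frac{\theta^{2H+1}}{2H^2\Gamma(2H)}$, which matches the normalizing constant in the statement.

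\textbf{Step 4: Kolmogorov bound.} The inverse function is $g(y)=H\Gamma(2H)y^{-2H}$ on $(0,\infty)$. It satisfies $g'<0$, $g''>0$ and $g'''<0$. Also $f_H(v_n)>0$ almost surely, since $v_n>0$ almost surely. Theorem \ref{dKol-rate-CLT-Vn} therefore applies directly and gives \eqref{dKol-FOU}.

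\textbf{Step 5: Wasserstein bound (the main obstacle).} Theorem \ref{dW-rate-CLT-Vn} produces the term $\|f_H''(\zeta_n)\|_{L^2}/\sqrt{T_n}$. Here $f_H''(x)=Cx^{-1/(2H)-2}$, which blows up as $x\to0$. So we must show that $\E[\zeta_n^{-2-1/H}]$ is bounded uniformly in $n$.

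Since $\zeta_n$ lies between $v_n(X)$ and $\rho(0)$, we have $\zeta_n^{-p}\le v_n(X)^{-p}+\rho(0)^{-p}$. It therefore suffices to prove $\sup_n\E[v_n(X)^{-p}]<\infty$ for large $p$.

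The first approach to try is a small-ball estimate. Write $v_n(X)$ as a quadratic form $\frac1n\sum_i\lambda_{i,n}\xi_i^2$ with independent standard Gaussians $\xi_i$, and use $\E[Q^{-p}]=\Gamma(p)^{-1}\int_0^\infty s^{p-1}\E e^{-sQ}\,ds$ together with $\E e^{-sQ}=\prod(1+2s\lambda_i/n)^{-1/2}$. This requires showing that at least about $2p$ eigenvalues of the covariance matrix of $(X_{t_i})$ stay bounded below, uniformly in $n$. That can be obtained by restricting to a fixed number of well-separated observation times, where the covariance matrix is nondegenerate.

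If this proves delicate, a fallback is to split on the event $\{v_n(X)\ge\rho(0)/2\}$. On this event $f_H''$ is bounded. The complement has probability decaying faster than any power of $T_n$, by hypercontractivity \eqref{hypercontractivity} applied to $V_n(X)$. Combined with a crude negative-moment bound, this complement contributes only $O(T_n^{-1/2})$.

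Either route yields \eqref{dW-FOU}.
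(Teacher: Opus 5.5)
Your Steps 1--4 coincide with the paper's proof. So does the reduction in Step 5 to a uniform bound on $\E[v_n(X^{\theta})^{-p}]$ via $\zeta_n^{-p}\le v_n(X^{\theta})^{-p}+\rho(0)^{-p}$. The paper takes $(\mathcal{A})$ with $\gamma=2-2H$ from \cite[Lemma 3.1]{AEA25}, applies Theorems \ref{dKol-rate-CLT-Vn} and \ref{dW-rate-CLT-Vn} with $f=f_H$, and uses that $|f_H''|$ is decreasing. So the Kolmogorov bound is fine as you wrote it. For the Wasserstein bound, the paper does not prove the negative-moment estimate: it quotes $\sup_{n\ge n_0}\|f_H''(v_n(X^{\theta}))\|_{L^2(\Omega)}<\infty$ from the proof of \cite[Theorem 5.4]{DEKN}. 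Your self-contained replacement for this step does not work as written. Two minor slips as well. First, $|f_H''(x)|^2$ is proportional to $x^{-4-1/H}$, not $x^{-2-1/H}$. Second, for $H<\frac12$ one has $\rho(0)-\rho(t)\asymp t^{2H}$, so $\rho'$ is neither continuous nor bounded at $0$; the variance lemma only needs $\rho'\in L^1_{\mathrm{loc}}$ there.

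The gap comes from the factor $1/n$ in $v_n$. In route (a), restricting to $m$ well-separated times gives, by interlacing, only a fixed number $m$ of eigenvalues of $\Sigma_n=\mathrm{Cov}(X_{t_1},\dots,X_{t_{n-1}})$ bounded below by some $c>0$. That yields only $\E e^{-sQ}\le(1+2sc/n)^{-m/2}$, hence $\E[Q^{-p}]\le C_{p,m}(n/c)^{p}$. Equivalently, $v_n(X)\ge\frac1n\sum_{j\le m}X_{s_j}^2$ gives $\E[v_n(X)^{-p}]=O(n^{p})$, not $O(1)$. No choice of the $m$ times repairs this. Uniformity with $m$ fixed would need eigenvalues of order $n$, but $\lambda_{\max}(\Sigma_n)\le\max_i\sum_j|\mathrm{Cov}(X_{t_i},X_{t_j})|=o(n)$.

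Route (b) inherits the defect. Hypercontractivity gives $\mathbb{P}(v_n(X)<\rho(0)/2)=O_q(T_n^{-q/2})$, but $n^{p}T_n^{-q/2}$ need not stay bounded. Nothing ties $n=T_n/\Delta_n$ polynomially to $T_n$ (e.g.\ $T_n=\log n$).

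The missing idea is to use a number of eigenvalues that grows like $T_n$. One way to do this:
\begin{itemize}
\item Split $\{0,\dots,n-1\}$ into residue classes mod $K=\lceil 1/\Delta_n\rceil$. Each class consists of $N\approx T_n$ times with spacing $K\Delta_n\in[1,2)$.
\item By convexity of $x\mapsto x^{-p}$, $\E[v_n(X)^{-p}]\le\max_k\E[w_k^{-p}]$, where $w_k$ is the average of $X_{t_i}^2$ over class $k$.
\item On such a grid the covariance of $Z^{\theta}$ is Toeplitz. Its aliased spectral density is bounded below uniformly in the spacing, because the spectral density of $Z^{\theta}$ is proportional to $|x|^{1-2H}/(\theta^2+x^2)$, which is positive away from $0$.
\item After the rank-$\le 2$ correction coming from $e^{-\theta t}Z_0$, at least $N-2$ eigenvalues are $\ge c$. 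Hence $w_k$ stochastically dominates $\frac{c}{N}\chi^2_{N-2}$, whose $p$-th negative moment is bounded uniformly over $N>2p+2$.
\end{itemize}
Alternatively, quote the bound from \cite{DEKN} as the paper does.
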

 \begin{proof}
  According to \cite[Lemma 3.1]{AEA25},  the  assumption $\mathbf{\left(\mathcal{A}\right)}$ holds with 
$Z=Z^{\theta }$ and $\gamma=2-2H$. Moreover, in this situation, we have 
 \begin{equation}
\rho(0)=\rho_{Z^{\theta}}(0) = \theta^{-2H} H \Gamma(2H)=f_H^{-1}(\theta),\quad \psi_n=\psi_n^{\theta}:= \Delta_n+\left\{ \begin{array}{ll} \frac{1}{\sqrt{T_n}} & \text { if } 0<H\leq\frac58, \\
&\\
T_n^{4H-3} & \text { if } \frac58<H<\frac34, \end{array} \right.\label{psi-FOU}
\end{equation}
where $\psi_n$ is given in \eqref{defi-psi} with $\gamma=2-2H$.\\
Now, using Theorem \ref{dKol-rate-CLT-Vn} with $f(x)=f_H(x)$, and \eqref{psi-FOU} combined with the fact that $f_H\left(\E[(Z_0^{\theta} )^2]\right)=\theta$ and $|f^{\prime}\left(\E[(Z_0^{\theta} )^2]\right)|=\frac{\theta^{2H+1}}{2H^2\Gamma(2H)}$, we obtain, for large
$n\geq1$,
\begin{align*}
 d_{Kol}\left(\frac{2H^2\Gamma(2H)\sqrt{T_n}\left(f_H(v_n\left(X^{\theta}\right))-\theta\right)}{\sigma\theta^{2H+1}},\mathcal{N}(0,1)\right)\nonumber &\leq \frac{C}{\sqrt{T_n}}+C\psi_n^{\theta}\nonumber\\
 &\leq C  \left\{ \begin{array}{ll}
\Delta_n+\frac{1}{\sqrt{n\Delta_n}} & \mbox{ if }0<H \leq \frac58, \\ ~~ &  \\
\Delta_n+\frac{1}{(n\Delta_n)^{3-4H}} & \mbox{ if }\frac58<H<\frac34,
\end{array} \right.  
\end{align*}
which implies \eqref{dKol-FOU}. \\
Similarly, using Theorem \ref{dW-rate-CLT-Vn} with $f(x)=f_H(x)$,  and \eqref{psi-FOU}, we have, for large
$n\geq1$,
\begin{align*}
 d_{W}\left(\frac{2H^2\Gamma(2H)\sqrt{T_n}\left(f_H(v_n\left(X^{\theta}\right))-\theta\right)}{\sigma\theta^{2H+1}},\mathcal{N}(0,1)\right)\nonumber &\leq C\left[\frac{\|f_H^{\prime\prime}(\zeta_{n})\|_{L^2(\Omega)}}{\sqrt{T_n}} +\frac{1}{\sqrt{T_n}}+ \psi_n \right].
\end{align*}
 Moreover, since $|f_H^{\prime\prime}|$ is decreasing and $\zeta_n$, in this case, is a random variable between $v_n\left(X^{\theta}\right)$ and  $\E[(Z_0^{\theta} )^2]$, we have $\|f_H^{\prime\prime}(\zeta_{n})\|_{L^2(\Omega)}\leq C \left[1+\|f_H^{\prime\prime}\left(v_n\left(X^{\theta}\right)\right)\|_{L^2(\Omega)}\right]$. Furthermore, 
according to the proof  of \cite[Theorem 5.4]{DEKN}, $\sup_{n\geq n_0}\|f_H^{\prime\prime}\left(v_n\left(X^{\theta}\right)\right)\|_{L^2(\Omega)}  < \infty$ for some $n_0\geq1$. Thus,  for large
$n\geq1$,
\begin{align*}
 d_{W}\left(\frac{2H^2\Gamma(2H)\sqrt{T_n}\left(f_H(v_n\left(X^{\theta}\right))-\theta\right)}{\sigma\theta^{2H+1}},\mathcal{N}(0,1)\right)
 &\leq C  \left\{ \begin{array}{ll}
\Delta_n+\frac{1}{\sqrt{n\Delta_n}} & \mbox{ if }0<H \leq \frac58, \\ ~~ &  \\
\Delta_n+\frac{1}{(n\Delta_n)^{3-4H}} & \mbox{ if }\frac58<H<\frac34.
\end{array} \right. 
\end{align*}
Therefore \eqref{dW-FOU} follows.
\end{proof}
\begin{remark}  It is interesting to note that   the upper bound in \eqref{dKol-FOU} is strictly sharper than the following  \begin{align*}
    \left[ n\Delta_n^{2H+1} \right]^{1/2}+   \left\{ \begin{array}{ll} \frac{1}{ \sqrt{n \Delta_n} } & \mbox{ if } 0<H \leq \frac{5}{8},  \\
~~ &  \\
\frac{1}{(n\Delta_n)^{3-4H}} & \mbox{ if }\frac{5}{8}<H<\frac{3}{4}, \end{array}\right.
\end{align*}
 obtained in \cite[Theorem 5.4.]{DEKN},  due to the fact that, for every $H\in(0,1)$, $\Delta_n^{2-2H}< n\Delta_n$, as $n \to \infty$. However,  the estimate \eqref{dW-FOU} has   already been obtained in \cite[Theorem 3.7]{AEA25}. 
\end{remark}

\subsection{ Fractional Ornstein-Uhlenbeck process of the second kind\label{FOUSKsection}}

In this section we consider  the so-called fractional Ornstein-Uhlenbeck process of the second kind, defined as follows
\begin{equation}
S_{0}^{\mu }=0,\mbox{ and }\ dS_{t}^{\mu }=-\mu S_{t}^{\mu }dt+dY_{t}^{(1)},\quad t\geq 0, \label{FOUSK}
\end{equation}%
where $Y_{t}^{(1)}=\int_{0}^{t}e^{-s}dB^H_{a_{s}}$ with $a_{s}=He^{\frac{s}{H}} $ and $\left\{ B^H_{t},t\geq 0\right\} $ is a fractional Brownian motion with Hurst parameter $H\in \left(\frac{1}{2},1\right)$,    whereas $\mu >0$ is the unknown  parameter. In view of  \cite[Equation 3.9]{KS}, the equation (\ref{FOUSK}) has an explicit solution:
\begin{equation*}
S_{t}^{\mu }=e^{-\mu t}\int_{0}^{t}e^{\mu s}dY_{s}^{(1)}=e^{-\mu t}\int_{0}^{t}e^{(\mu -1)s}dB^H_{a_{s}}=H^{(1-\mu )H}e^{-\mu t}\int_{a_{0}}^{a_{t}}r^{(\mu -1)H}dB^H_{r}.
\end{equation*}
Hence we can also write
\begin{equation*}
S_{t}^{\mu }=Z_{t}^{\mu }-e^{-\mu t}Z_{0}^{\mu },
\end{equation*}
where
\begin{equation*}
Z_{t}^{\mu }=e^{-\mu t}\int_{-\infty }^{t}e^{(\mu -1)s}dB^H_{a_{s}}=H^{(1-\mu )H}e^{-\mu t}\int_{0}^{a_{t}}r^{(\mu -1)H}d B^H_{r}.
\end{equation*}
Moreover,  $\left\{Z_t^\mu,t\geq 0\right\} $ is a centered stationary Gaussian process, see, for instance,  \cite[Theorem 3.8]{khalifa-hermite}. 
Furthermore, it follows from \cite[Lemma 37]{EV} that, for every $H\in (\frac{1}{2},1)$,
\begin{equation}
\rho_{Z^{\mu}}(0):=E\left[ \left( Z_{0}^{\mu }\right) ^{2}\right] =\frac{(2H-1)H^{2H}}{\mu} \mathcal{B} (1-H+\mu H,2H-1)=:f_{\mu}^{-1}(\mu),\label{defi-f-mu}
\end{equation}%
where   $\mathcal{B}(\cdot,\cdot)$ is the usual beta function. 

We will need the following lemma.
\begin{lemma}\label{behavior-rho-FOUSK} Let 
    $H \in(0,1) \backslash\left\{\frac{1}{2}\right\}$. Then there exists a constant $C>0$ depending only on $\theta$ and $H$ such that, for large  $t >2$,
      \begin{align}\max\left(|\rho_{Z^{\mu }}(t)|,|\rho_{Z^{\mu }}^{\prime}(t)|\right)\leq    C e^{-\frac12\min
\left(\mu,\frac{1}{H}-1\right) t}.\label{behavior-rho-prime-FOUSK} 
     \end{align}  
  \end{lemma}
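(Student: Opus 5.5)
The plan is to reduce $\rho_{Z^\mu}$ to the covariance of a simpler stationary Gaussian process with an explicit kernel. That process is the Lamperti transform $U_t:=e^{-t}B^H_{a_t}$, with $a_t=He^{t/H}$. Then $Z^\mu$ becomes an exponentially weighted moving average of $U$, and exponential decay of $\rho_{Z^\mu}$ follows from exponential decay of the covariance of $U$ together with elementary convolution bounds for exponentials.

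\emph{Step 1: reduction to $U$.} By integration by parts, $dY^{(1)}_s=e^{-s}dB^H_{a_s}=dU_s+U_s\,ds$. Since $Z^\mu_t=\int_{-\infty}^t e^{-\mu(t-s)}\,dY^{(1)}_s$, this gives
\begin{equation*}
Z^\mu_t=U_t+(1-\mu)\int_{-\infty}^t e^{-\mu(t-s)}U_s\,ds .
\end{equation*}
The boundary term at $-\infty$ vanishes in $L^2$ because $U$ is stationary.

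\emph{Step 2: the kernel of $U$.} The covariance of $U$ is
\begin{equation*}
r_U(t)=\tfrac{H^{2H}}{2}\Big(e^{t}+e^{-t}-\big|e^{t/(2H)}-e^{-t/(2H)}\big|^{2H}\Big),
\end{equation*}
which is smooth for $t>0$. Expanding $(1-e^{-t/H})^{2H}$ for large $t$ shows that $|r_U(t)|+|r_U'(t)|\le Ce^{-\beta t}$ with $\beta:=\min(1,\frac1H-1)$. For $H>\frac12$ we have $\beta=\frac1H-1$.

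\emph{Step 3: the covariance of $Z^\mu$.} Expanding $\E[Z^\mu_tZ^\mu_0]$ with the representation of Step 1 gives $\rho_{Z^\mu}(t)$ as the sum of three pieces:
\begin{itemize}
\item $r_U(t)$;
\item two single convolutions of the form $\int_{-\infty}^{t}e^{-\mu(t-s)}r_U(s)\,ds$ and $\int_{-\infty}^{0}e^{\mu v}r_U(t-v)\,dv$;
\item a double integral $\int\!\!\int e^{-\mu(t-s)}e^{\mu v}r_U(s-v)\,ds\,dv$.
\end{itemize}
Each piece is bounded by splitting the integration domain at the midpoint $s=t/2$, or at $s-v=t/2$ in the double integral. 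On one side the exponential factor $e^{-\mu(\cdot)}$ is at most $e^{-\mu t/2}$. On the other side $r_U$ is at most $e^{-\beta t/2}$. In each case the bound is $Ce^{-\frac12\min(\mu,\beta)t}$, where the factor $\frac12$ absorbs the polynomial factors that appear when $\mu=\beta$.

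\emph{Step 4: the derivative.} For $\rho'_{Z^\mu}$, differentiate the three pieces in $t$. This produces terms with $r_U'$, which obeys the same bound as $r_U$, and terms with $\mu$ times the original integrands, plus boundary terms evaluated at $s=t$. The same splitting argument then gives the same exponential bound. Continuity of $\rho$ and $\rho'$ on $[0,\infty)$ also follows from this explicit representation.

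The main obstacle is matching the exact rate $\min(\mu,\frac1H-1)$ for $H<\frac12$. In that range $\frac1H-1>1$, and the $e^{-t}$ contribution in $r_U$ would a priori only give rate $\min(\mu,1)$. I would first check, by tracking the $e^{\pm t}$ terms exactly, whether these contributions cancel in the combination $U_t+(1-\mu)\int_{-\infty}^t e^{-\mu(t-s)}U_s\,ds$. The reason to expect this is that $Y^{(1)}$ is built from increments of $B^H$, so pure $e^{\pm t}$ terms should not survive. If they do not cancel, I would restrict the claim to the case $H>\frac12$, which is the case used in this section, where $\beta=\frac1H-1$ and the argument above applies directly.
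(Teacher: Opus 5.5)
Your route differs from the paper's. The paper uses an explicit representation $\rho_{Z^\mu}(t)=e^{-\mu t}\rho_{Z^\mu}(0)+c_H\,h(t)$, with $h=m+l-k$, taken from earlier work. There the fBm kernel appears directly in exponential time as $(e^{u/(2H)}-e^{-u/(2H)})^{2H-2}\sim e^{-(\frac1H-1)u}$, so the Lamperti weight $e^{-s}$ is absorbed exactly. Only the rates $\mu$ and $\frac1H-1$ ever occur, and the bound on $\rho'_{Z^\mu}$ comes from differentiating $m,l,k$.

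Your reduction to $U_t=e^{-t}B^H_{a_t}$ with the $t/2$ splitting is self-contained and elementary, and Steps 1--4 are correct. They give a complete proof whenever $H>\frac12$ (the only case used later) or $\mu\le 1$. The price is that $dY^{(1)}=dU+U\,dt$ introduces an $e^{-|t|}$ mode into $r_U$. For $H<\frac12$ and $\mu>1$ your splitting then only yields the rate $\frac12$, strictly weaker than the claimed $\frac12\min(\mu,\frac1H-1)$. As submitted, this case is a gap: the cancellation is left unchecked, and retreating to $H>\frac12$ does not prove the lemma as stated.

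The cancellation you anticipate does hold, and your own machinery proves it. Write $r_U(t)=\frac{H^{2H}}{2}e^{-|t|}+q(t)$ with $q(t)=\frac{H^{2H}}{2}e^{|t|}\bigl(1-(1-e^{-|t|/H})^{2H}\bigr)$.
\begin{itemize}
\item Since $1-(1-x)^{2H}\le\max(1,2H)\,x$ on $[0,1]$, you get $0\le q(t)\le Ce^{-(\frac1H-1)|t|}$.
\item The identity $q'(t)=q(t)-H^{2H}e^{-(\frac1H-1)t}(1-e^{-t/H})^{2H-1}$ gives $|q'(t)|\le Ce^{-(\frac1H-1)t}$ for $t\ge 1$.
\item The formula of Step 3 is linear in the covariance of $U$, and it sends $e^{-|t|}$ exactly to $\mu^{-1}e^{-\mu|t|}$. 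To see this, apply it to the stationary Ornstein--Uhlenbeck process $U_t=\int_{-\infty}^t e^{-(t-s)}\,dW_s$. There $dU_t+U_t\,dt=dW_t$, so the resulting $Z$ is the Ornstein--Uhlenbeck process with rate $\mu$ and covariance $\frac{1}{2\mu}e^{-\mu|t|}$.
\end{itemize}
Running your Steps 3--4 on $q$ alone, with $\beta$ replaced by $\frac1H-1$, then gives $Ce^{-\frac12\min(\mu,\frac1H-1)t}$ for both $\rho_{Z^\mu}$ and $\rho'_{Z^\mu}$, for every $H\in(0,1)\setminus\{\frac12\}$.

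Separately, drop the side remark that $\rho'_{Z^\mu}$ is continuous on $[0,\infty)$. It is not needed here, and it is false for $H<\frac12$: $r_U'(t)\sim -Ht^{2H-1}$ as $t\downarrow 0$, while the other terms of $\rho'_{Z^\mu}$ stay bounded.
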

  \begin{proof}  
  According to \cite{KS}, there exist $c,C>0$ such that for all large $t>2$,
\begin{equation*}
\left|\rho_{Z^{\mu }}(t)\right|= \left|\E\left[ Z_{0}^{\mu }Z_{t}^{\mu }\right]\right| \leq Ce^{-c|t|}.
\end{equation*}
Thus, it remains to study $|\rho_{Z^{\mu }}^{\prime}(t)|$.
 In view of \cite[Theorem 3.1]{khalifa-hermite}, we have, for very $t\geq0$,
  \begin{align}\rho_{Z^{\mu }}(t)&=e^{-\mu t}\rho_{Z^{\mu }}(0)+(2H-1)H^{2H-1}\frac{e^{-\mu t}}{2}\int_{0}^{t}\int_{-\infty}^{0}
e^{\mu u} e^{\mu v}
\left(e^{\frac{v-u}{2H}}-e^{-\frac{v-u}{2H}}\right)^{2H-2}dudv\nonumber\\
&=:e^{-\mu t}\rho_{Z^{\mu }}(0)+\frac12 (2H-1)H^{2H-1} h(t).\label{eq1}
\end{align}
  From the proof of  \cite[Lemma 2.7]{khalifa-hermite}, we have, for every  $H\in(0,\frac12)\cup(\frac12,1)$ and $t>2$,
\begin{eqnarray*}
h(t)&=&e^{-\mu t}\int_0^t\int_{-\infty}^0e^{\mu x}e^{\mu
y}\left(e^{\frac{y-x}{2H}}-
e^{-\frac{y-x}{2H}}\right)^{2H-2}dxdy\nonumber\\
&=&\frac{e^{-\mu t}}{2\mu}\int_0^{t}e^{-\mu
u}\left(e^{\frac{u}{2H}}-
e^{-\frac{u}{2H}}\right)^{2H-2}\left(e^{2\mu
u}-1\right)du +\frac{e^{\mu
t}}{2\mu}\int_t^{\infty}e^{-\mu
u}\left(e^{\frac{u}{2H}}-
e^{-\frac{u}{2H}}\right)^{2H-2}du\nonumber
\\
&&-\frac{e^{-\mu
t}}{2\mu}\int_t^{\infty}e^{-\mu
u}\left(e^{\frac{u}{2H}}-
e^{-\frac{u}{2H}}\right)^{2H-2} du\nonumber\\
&=:&m(t)+l(t)-k(t).
\end{eqnarray*}
  Moreover, there exists a constant $C>0$ depending only on $\mu$ and $H$ such that, for large  $t >2$,
 \begin{equation}\max\left(|m(t)|, |l(t)|,|k(t)|\right)\leq  C e^{-\frac12\min
\left(\mu,\frac{1}{H}-1\right) t}.\label{eq2}\end{equation}
On the other hand,   for every  $H\in(0,\frac12)\cup(\frac12,1)$ and $t>2$,
\begin{eqnarray*}
 h^{\prime}(t)
&=&-\mu m(t)+\frac{1}{2\mu} \left(e^{\frac{t}{2H}}-
e^{-\frac{t}{2H}}\right)^{2H-2}\left(1-e^{-2\mu
t}\right) +\mu l(t)  -\frac{1}{2\mu}\left(e^{\frac{t}{2H}}-
e^{-\frac{t}{2H}}\right)^{2H-2} \nonumber
\\
&&+\mu k(t)  +\frac{e^{-2\mu
t}}{2\mu}\left(e^{\frac{t}{2H}}-
e^{-\frac{t}{2H}}\right)^{2H-2}.
\end{eqnarray*}
Combining this with \eqref{eq2} and the fact that  $\left(e^{\frac{t}{2H}}-
e^{-\frac{t}{2H}}\right)^{2H-2} \sim e^{-\left(\frac{1}{H}-1\right) t},\quad
\mbox{ as } t\rightarrow\infty$, there exists a constant $C>0$ depending only on $\mu$ and $H$ such that, for large  $t >2$,
 \begin{equation} |h^{\prime}(t)| \leq  C e^{-\frac12\min
\left(\mu,\frac{1}{H}-1\right) t}.\label{eq3}\end{equation}
  Therefore, using \eqref{eq1} and \eqref{eq3}, the proof of \eqref{behavior-rho-prime-FOUSK} is complete.
   \end{proof}

 Now, we are ready to derive a Berry-Esseen bound 
in Kolmogorov and   Wasserstein distances  for the   estimator $f_{\mu}\left(v_n\left(S^{\mu}\right)\right)$ of $\mu$, with  $f_{\mu}$ is given in \eqref{defi-f-mu}, and
\[v_n\left(S^{\mu}\right) = \frac{1}{n} \sum_{i =0}^{n-1}\left[S_{t_{i}}^{\mu}\right]^{2}.\]
 \begin{theorem}  Denote $\sigma_{\mu}^2:= 4 \int_{\R}\rho_{Z^{\mu}}^2(r)dr$ with $\rho_{Z^{\mu}}(r)=\E\left(Z_r^{\mu}Z_0^{\mu}\right)$. If $H\in(\frac12,1)$, then there  exists $C>0$ depending only on $\mu$ and $H$   such that for large
$n\geq1$,
\begin{align}
 d_{Kol}\left(\frac{\sqrt{T_n}\left(f_{\mu}\left(v_n\left(S^{\mu}\right)\right)-\mu\right)}{\sigma_{\mu}\left|f_{\mu}^{\prime}\left(\rho_{Z^{\mu}}(0)\right)\right|},\mathcal{N}(0,1)\right)
 &\leq C  \left(\Delta_n+\frac{1}{\sqrt{n\Delta_n}}\right),  \label{dKol-FOUSK}
\end{align}
and 
\begin{align}
 d_{W}\left(\frac{\sqrt{T_n}\left(f_{\mu}\left(v_n\left(S^{\mu}\right)\right)-\mu\right)}{\sigma_{\mu}\left|f_{\mu}^{\prime}\left(\rho_{Z^{\mu}}(0)\right)\right|},\mathcal{N}(0,1)\right)
 &\leq C  \left(\Delta_n+\frac{1}{\sqrt{n\Delta_n}}\right),  \label{dW-FOUSK}
\end{align}
 where  $f_{\mu}$ is given in \eqref{defi-f-mu}.
  \end{theorem}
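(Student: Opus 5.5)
The plan mirrors the proof of the fOU first-kind theorem: check Assumption $\mathbf{(\mathcal{A})}$ for $Z=Z^{\mu}$, then apply Theorems \ref{dKol-rate-CLT-Vn} and \ref{dW-rate-CLT-Vn} with $X=S^{\mu}$ and $f=f_{\mu}$.

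\textbf{Step 1: Assumption $\mathbf{(\mathcal{A})}$ with any $\gamma$.} The representation $S^{\mu}_t=Z^{\mu}_t-e^{-\mu t}Z^{\mu}_0$ is exactly \eqref{X-Z} with $\theta=\mu$. Lemma \ref{behavior-rho-FOUSK} gives exponential decay of $|\rho_{Z^{\mu}}|$ and $|\rho'_{Z^{\mu}}|$. Exponential decay dominates $Ct^{-\gamma}$ for every $\gamma>0$, so \eqref{behavior-rho} holds with, say, $\gamma=1$, or indeed any $\gamma\geq \frac34$. The same decay gives $\sigma_{\mu}^2<\infty$. Continuity of $\rho_{Z^{\mu}}$ and $\rho'_{Z^{\mu}}$ on $[0,\infty)$ follows from the explicit formula \eqref{eq1}. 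The only delicate point is near $t=0$, where the integrand $(e^{u/2H}-e^{-u/2H})^{2H-2}$ is integrable since $2H-2>-1$. Since $\gamma\geq\frac34$, \eqref{defi-psi} gives $\psi_n=\Delta_n+T_n^{-1/2}$.

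\textbf{Step 2: Kolmogorov bound via Theorem \ref{dKol-rate-CLT-Vn}.} Here I must verify the hypotheses on $g=f_{\mu}^{-1}$, namely
\[
g(x)=\frac{(2H-1)H^{2H}}{x}\,\mathcal{B}(1-H+xH,2H-1),
\]
and I need $g'<0$, $g''>0$ and $g'''<0$ on $(0,\infty)$. This is the main obstacle. I would write
\[
\mathcal{B}(1-H+xH,2H-1)=\int_0^1 s^{xH-H}(1-s)^{2H-2}\,ds,
\]
so $g(x)=c\,x^{-1}\int_0^1 e^{xH\log s}\,w(s)\,ds$ with a positive weight $w$. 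Thus $g$ is a product of the completely monotone functions $x^{-1}$ and a Laplace-type transform $\int e^{-xa(s)}w(s)\,ds$ with $a(s)=-H\log s\geq 0$. A product of completely monotone functions is completely monotone, hence $(-1)^k g^{(k)}>0$ for all $k$, which gives the required sign pattern. Positivity $f_{\mu}(v_n(S^{\mu}))>0$ a.s. holds because $v_n>0$ a.s. and $g$ maps $(0,\infty)$ bijectively onto $(0,\infty)$, being strictly decreasing with limits $\infty$ and $0$. Theorem \ref{dKol-rate-CLT-Vn} then yields $C/\sqrt{T_n}+C\psi_n\leq C(\Delta_n+(n\Delta_n)^{-1/2})$, which is \eqref{dKol-FOUSK}.

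\textbf{Step 3: Wasserstein bound via Theorem \ref{dW-rate-CLT-Vn}.} Note that $f_{\mu}$ is a smooth diffeomorphism of $(0,\infty)$, as the inverse of $g$ with $g'<0$. It remains to show $\sup_n\|f''_{\mu}(\zeta_n)\|_{L^2(\Omega)}<\infty$.

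First compute $f''=-g''(f)/g'(f)^3$. As $x\to0$, $g(x)\sim c/x$. As $x\to\infty$, $g$ decays like $x^{-1}\cdot x^{-(2H-1)}$ by the Beta asymptotics. So $|f''(y)|$ grows at most polynomially as $y\to0$ and is bounded for $y$ large, mirroring the fOU case where $f_H''$ is a power. Since $\zeta_n$ lies between $v_n(S^{\mu})$ and $\rho_{Z^{\mu}}(0)$, it suffices to bound $\E[v_n(S^{\mu})^{-p}]$ uniformly in $n$ for some $p$.

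For this negative moment I would follow the argument of \cite[Theorem 5.4]{DEKN}. Bound $v_n$ below by an average of squares of a fixed number of nearly independent Gaussian variables, and use that $\E[\chi^{-p}]<\infty$ for a chi-square variable with enough degrees of freedom. Combining with Step 1 gives \eqref{dW-FOUSK}.
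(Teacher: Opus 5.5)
Your proposal is correct in outline and follows the paper's route. You verify $(\mathcal{A})$ for $Z^{\mu}$ with $\gamma\geq\frac34$, so that $\psi_n=\Delta_n+T_n^{-1/2}$, and apply the general Kolmogorov and Wasserstein theorems with $f=f_{\mu}$. You then reduce the Wasserstein bound to $\sup_{n\geq n_0}\|f_{\mu}''(\zeta_n)\|_{L^2(\Omega)}<\infty$, using a polynomial bound on $|f_\mu''|$ together with negative moments of $v_n(S^{\mu})$ taken from \cite{DEKN}.

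Your complete-monotonicity argument is a genuine improvement: the paper only asserts $g_\mu'<0$, $g_\mu''>0$, $g_\mu'''<0$ ``by straightforward calculations''. For $f_\mu''$ the paper avoids asymptotics altogether. It uses the global bounds $|g_\mu'(x)|\geq g_\mu(x)/x$, $g_\mu(x)\leq C/x$ (hence $f_\mu(y)\leq C/y$) and $|g_\mu''(x)|\leq C(x^{-3}+x^{-2}+x^{-1})$, which give $|f_\mu''(\zeta)|\leq C(\zeta^{-3}+\zeta^{-4}+\zeta^{-5})$. That is shorter than your route, which also needs the behaviour of $g_\mu'$ and $g_\mu''$ at $0$ and $\infty$, not only that of $g_\mu$.

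One caution: your sketch of the negative-moment step would not work as literally stated. Suppose $v_n\geq\frac1k\sum_{j=1}^k W_j^2$ pathwise, with $k$ fixed and each $W_j$ linear in the observations. Then $\mathrm{Var}(W_j)\leq\frac{k}{n}\lambda_{\max}(\Sigma_n)\leq Ck/T_n$, because the covariance matrix $\Sigma_n$ of $(S^{\mu}_{t_i})_{i<n}$ has row sums of order $1/\Delta_n$. The resulting bound on $\E[v_n^{-p}]$ therefore grows like $T_n^{p}$.

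The number of nearly independent variables must instead grow like $T_n$. One way to arrange this:
\begin{itemize}
\item Split the indices into residue classes modulo $\ell=\lceil L/\Delta_n\rceil$ with $L$ large. Then $v_n=\sum_r\frac{m_r}{n}A_r$, where $A_r$ is the average of the roughly $T_n/L$ squares in class $r$.
\item Convexity of $x\mapsto x^{-p}$ gives $\E[v_n^{-p}]\leq\max_r\E[A_r^{-p}]$.
\item Drop the first index of each class. The exponential decay of $\rho_{Z^{\mu}}$ (and of the $e^{-\mu t}Z^{\mu}_0$ correction) then makes the covariance of the remaining variables diagonally dominant, with smallest eigenvalue at least $c>0$.
\item Hence $A_r\geq c'\chi^2_m/m$ pathwise, and the $(-p)$th moment of $\chi^2_m/m$ is bounded uniformly in $m\geq 2p+2$.
\end{itemize}
Alternatively, do as the paper does and quote the proof of \cite[Theorem 5.5]{DEKN}, which treats $S^{\mu}$ itself, rather than Theorem 5.4.
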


\begin{proof} 
In view of Lemma \ref{behavior-rho-FOUSK}  According to \cite[Lemma 3.1]{AEA25},  the  assumption $\mathbf{\left(\mathcal{A}\right)}$ holds with 
$Z=Z^{\mu }$ and every $\gamma>0$. In this situation, using \eqref{defi-f-mu}, we have 
 \begin{equation}
\rho(0)=\rho_{Z^{\mu}}(0)=f_{\mu}^{-1}(\mu)=:g_{\mu}(\mu).\label{rho-FOUSK}
\end{equation}
Moreover, the  assumption $\mathbf{\left(\mathcal{A}\right)}$ holds for  every $\gamma>0$, we can choose $\gamma>\frac34$, and then, the function $\psi_n$ defined by \eqref{defi-psi}, becomes
\begin{equation}
 \psi_n=\psi_n^{\mu}:= \Delta_n+  \frac{1}{\sqrt{T_n}}. \label{psi-FOUSK}
\end{equation}
On the other hand, by straightforward calculations, the function $g_{\mu}(x)$ satisfies, for all $x>0$, 
\begin{equation}g_{\mu}^{\prime}(x)<0,\, g_{\mu}^{\prime\prime}(x)>0,\, g_{\mu}^{\prime\prime\prime}(x)<0.\label{cond-g-mu}
\end{equation}
Next, applying Theorem \ref{dKol-rate-CLT-Vn} with $f(x)=f_{\mu}(x)$ and using $f_{\mu}\left(\E[(Z_0^{\mu} )^2]\right)=\mu$, \eqref{psi-FOUSK} 
and \eqref{cond-g-mu}, we deduce that, for large
$n\geq1$,
\begin{align*}
 d_{Kol}\left(\frac{\sqrt{T_n}\left(f_{\mu}\left(v_n\left(S^{\mu}\right)\right)-\mu\right)}{\sigma_{\mu}\left|f_{\mu}^{\prime}\left(\rho_{Z^{\mu}}(0)\right)\right|},\mathcal{N}(0,1)\right)
 &\leq \frac{C}{\sqrt{T_n}}+C\psi_n^{\mu}\nonumber\\
 &\leq C  \left(\Delta_n+\frac{1}{\sqrt{n\Delta_n}}  \right), 
\end{align*}
which proves \eqref{dKol-FOUSK}. \\
Similarly, applying Theorem \ref{dW-rate-CLT-Vn} with $f(x)=f_{\mu}(x)$, we have, for large
$n\geq1$,
\begin{align*}
 d_{W}\left(\frac{\sqrt{T_n}\left(f_{\mu}\left(v_n\left(S^{\mu}\right)\right)-\mu\right)}{\sigma_{\mu}\left|f_{\mu}^{\prime}\left(\rho_{Z^{\mu}}(0)\right)\right|},\mathcal{N}(0,1)\right)
 &\leq C\left[\frac{\|f_{\mu}^{\prime\prime}(\zeta_{n})\|_{L^2(\Omega)}}{\sqrt{T_n}} +\frac{1}{\sqrt{T_n}}+\psi_n^{\mu}\right]\\
 &\leq C\left[\frac{\|f_{\mu}^{\prime\prime}(\zeta_{n})\|_{L^2(\Omega)}}{\sqrt{T_n}} +\Delta_n+\frac{1}{\sqrt{T_n}}\right],
\end{align*}
where, in this case, $\zeta_n$ is a random variable between $v_n\left(S^{\mu}\right)$ and  $\E[(Z_0^{\mu} )^2]$.\\
To achieve \eqref{dW-FOUSK}, it is sufficient to prove that,  for some $n_0\geq1$, 
\begin{align}\sup_{n\geq n_0}\|f_{\mu}^{\prime\prime}(\zeta_{n})\|_{L^2(\Omega)}< \infty. \label{f-ineq}
\end{align}
In view of \eqref{defi-f-mu} and \eqref{rho-FOUSK}, we have, for all $x>0$,
\begin{align*}
g_{\mu}(x)=\frac{(2H-1)H^{2H}}{x}\int_0^1 t^{xH-H}(1-t)^{2H-2}dt,
\end{align*}
\begin{align*}
g_{\mu}^{\prime}(x)= (2H-1)H^{2H}   \int_0^1\left(-\frac{1}{x^2}+\frac{H\log(t) }{x}\right) t^{xH-H}(1-t)^{2H-2}dt, 
\end{align*}
and
\begin{align*}
g_{\mu}^{\prime\prime}(x)= (2H-1)H^{2H}   \int_0^1\left(\frac{2}{x^3}-\frac{2H\log(t) }{x^2}+\frac{H^2\log^2(t) }{x}\right) t^{xH-H}(1-t)^{2H-2}dt.
\end{align*}
Observe that, for all $x>0$, 
\begin{align}
g_{\mu}(x)>0,\quad g_{\mu}^{\prime}(x)<0,\quad g_{\mu}^{\prime\prime}(x)>0, \quad |g_{\mu}^{\prime}(x)|\geq \frac{|g_{\mu}(x)|}{x}. \label{ineq1-g-mu-FOUSK}
\end{align}
Moreover, there exists a constant $C>0$ depending only on $H$ such that, for all $x>0$, 
\begin{align}
g_{\mu}(x)\leq \frac{C}{x},\quad f_{\mu}(x)\leq \frac{C}{x},\quad \left|g_{\mu}^{\prime\prime}(x)\right|\leq C\left(\frac{1}{x^3}+\frac{1}{x^2}+\frac{1}{x}\right). \label{ineq2-g-mu-FOUSK}
\end{align}
Combining \eqref{ineq1-g-mu-FOUSK}, \eqref{ineq2-g-mu-FOUSK} and the fact that 
\[f_{\mu}^{\prime\prime}(x)=-\frac{g_{\mu}^{\prime\prime}(f_{\mu}(x))}{\left(g_{\mu}^{\prime}(f_{\mu}(x))\right)^3},\]
we deduce 
\begin{align*}
\left|f_{\mu}^{\prime\prime}(\zeta_{n})\right|=\frac{\left|g_{\mu}^{\prime\prime}(f_{\mu}(\zeta_{n}))\right|}{\left|g_{\mu}^{\prime}(f_{\mu}(\zeta_{n}))\right|^3}
&\leq C\left(\frac{1}{|f_{\mu}(\zeta_{n})|^3}+\frac{1}{|f_{\mu}(\zeta_{n})|^2}+\frac{1}{|f_{\mu}(\zeta_{n})|}\right)\frac{\left|f_{\mu}(\zeta_{n})\right|^3}{\left|\zeta_{n}\right|^3}\\
&= C\left(1+ |f_{\mu}(\zeta_{n})|+ |f_{\mu}(\zeta_{n})|^2\right)\frac{1}{\left|\zeta_{n}\right|^3}\\
&\leq C\left(\frac{1}{\left|\zeta_{n}\right|^3}+ \frac{1}{\left|\zeta_{n}\right|^4}+\frac{1}{\left|\zeta_{n}\right|^5}\right)\\
&\leq C\left(1+\frac{1}{\left|v_n\left(S^{\mu}\right)\right|^3}+ \frac{1}{\left|v_n\left(S^{\mu}\right)\right|^4}+\frac{1}{\left|v_n\left(S^{\mu}\right)\right|^5}\right).
\end{align*}
On the other hand,  according to the proof  of \cite[Theorem 5.5]{DEKN}, we have, for every $p>0$, there is $n_0 \geq 1$ such that
\[\sup_{n\geq n_0}\|\left(v_n\left(S^{\mu}\right)\right)^{-p}\|_{L^2(\Omega)}  < \infty.\]
Thus the estimate \eqref{f-ineq} is obtained, and therefore the proof of \eqref{dW-FOUSK} is complete.
\end{proof}

\begin{remark}  In \cite[Theorem 5.5]{DEKN}, the following result is proved: For $H\in(\frac12,1)$, 
\begin{eqnarray}
d_{W}\left(\frac{\sqrt{T_n}\left(f_{\mu}\left(v_n\left(S^{\mu}\right)\right)-\mu\right)}{\sigma_{\mu}\left|f_{\mu}^{\prime}\left(\rho_{Z^{\mu}}(0)\right)\right|},\mathcal{N}(0,1)\right)
 &\leq C  \left( \left[n\Delta_n^{2H+1}\right]^{1/2}+  \frac{1}{\sqrt{n\Delta_n}}  \right).\label{ex-FOUSK-DEKN}
\end{eqnarray}
In this case, we observe that the rate of
convergence provided by our bound in \eqref{dKol-FOUSK} and \eqref{dW-FOUSK} is better than  the best estimate to
date, obtained in \eqref{ex-FOUSK-DEKN},  since $\Delta_n^{2-2H}< n\Delta_n$, as $n \to \infty$.
\end{remark}

\noindent{\Large{\textbf{Declarations}}}\\

\noindent\textbf{Competing interests}\\ The authors declare no
competing interests.\\

\noindent\textbf{Authors' contributions}  All authors
read and approved the final manuscript. \\

\noindent\textbf{Funding}\\
 Y. Chen is supported by NSFC (12461029) and the PhD Research Startup Fund of Baoshan University. \\

\noindent{\textbf{Availability of data and materials}}\\
Not applicable.\\

\end{document}